\documentclass[10pt, a4paper]{article}
\usepackage{algorithm}
\usepackage{algorithmic}
\usepackage{booktabs}
\usepackage{ulem}
\usepackage{hyperref}
\usepackage{amsmath}        
\usepackage{amsfonts}       
\usepackage{amssymb}        
\usepackage{amsthm}
\newtheorem{theorem}{Theorem}[section]
\newtheorem{lemma}[theorem]{Lemma}
\newtheorem{proposition}[theorem]{Proposition}

\newtheorem{corollary}[theorem]{Corollary}
\newtheorem{example}[theorem]{Example}

\newtheorem{remark}[theorem]{Remark}



\begin{document}
\renewcommand{\algorithmicrequire}{ Input:}
\renewcommand{\algorithmicensure}{Output:}
\makeatletter
\title{Algorithms for Modular Parametrizations of Elliptic Curves over $\mathbb{Q}$}

\author{SanMin Wang\\
 Faculty of Science, Zhejiang Sci-Tech University, \\
Hangzhou 310018, P.R. China\\
wangsanmin@hotmail.com}





\maketitle

\begin{abstract}
Let \( E \) be a complex elliptic curve with conductor \( N \) and   modular invariant \( j(E) \in \mathbb{Q} \). We construct a class of modular polynomials $F_N(x,j)$ that relate  the modular function $x$ on $X_0(N)$ to the $j$-invariant $j$, where $x$ is  obtained by composing  the first coordinate function of $E$ with
the modular parametrization $\varphi: X_0(N) \rightarrow E$.
Using $F_N(x,j)$,  we can precisely determine the poles of $\varphi$,
compute exact values of $\varphi$ at cusps,  and
develop an algorithm for calculating ramification points of $\varphi$.
Moreover, $F_N(x,j)$ yields   an efficient algorithm for computing  the fibres of $\varphi$
over arbitrary points on $E$.
In some sense, $F_N(x,j)$  also provides   a ``total"  formula for computing the minimal polynomial of
the images of  Heegner points on $X_0(N)$ under $\varphi$.
Especially, we  compute  the semi-trace of the image $\varphi ([\frac{{ - 1 + \sqrt { - 3} }}{2}])$  of  the  CM-point   $[\frac{-1 + \sqrt{-3}}{2}]$ on  $X_{0}(389)$,  under the action of a 65-element subgroup of the 260-element  Galois group of  $\mathbb{Q}(\sqrt{-3}, j(389 \cdot \frac{-1 + \sqrt{-3}}{2}))$. Finally,  we associate a point of infinite order in~\( E(\mathbb{Q}) \) with an infinite sequence~$\{ (j(\tau_n), j(N\tau_n)) \}_{n \in \mathbb{Z}^+} $ of algebraic numbers whose degrees are bounded by the degree of~$\varphi$. This provides one seemingly  practicable  approach to addressing  the BSD conjecture.

\end{abstract}

\section{Introduction}
Let \( E \) be a complex elliptic curve with the conductor \( N \) and modular invariant \( j(E) \in \mathbb{Q} \).
It follows from the work of Wiles, Taylor, Breuil, Conrad and Diamond (\cite{Wiles}, \cite{Taylor-Wiles}, \cite{Breuil-Conrad-Diamond-Taylor}) that  there exists a surjective holomorphic map of compact Riemann surfaces from  the modular curve  \(X_0(N)\) to $E$,
\[\varphi: X_0(N) \to E.\]
 The map \(\varphi\) is called a  modular parametrization of \(E\).

Let \(x\) and \(y\) be the coordinate functions on the affine 2-space \(\mathbb{A}^2_{\mathbb{C}}\). Then \(x \circ \varphi\) and \(y \circ \varphi\) define two meromorphic functions on \(X_0(N)\). We denote \(x \circ \varphi\) simply by \(x\) and \(y \circ \varphi\) by \(y\). By the general theory of algebraic function fields, $x$ and the j-invariant $j$ satisfy an algebraic equation \(F_N(x,j) = 0\).

In Section 2, we present an algorithm for constructing the modular polynomial \(F_N(x,j)\). 
The modular function $x$ generally has  poles in $\mathcal{H}$, then we can't use  the evaluation-interpolation method  in \cite{Enge} to determine $F_N(x, j)$.   Currently, only the $q$-expansion method has been successfully implemented.
Define \( J: \mathcal{H} \to \mathbb{C} \) by \( J(\tau) = j(N\tau) \) for any \(\tau \in \mathcal{H}\).
By the same method, we can construct modular polynomials relating \( x \) and \( J \),
\( y \) and \( j \), \( y \) and \( J \), and denote  them  by \( f_N(x,j) \), \( G_N(y,j) \) and \( g_N(y,J) \), respectively.
 For algorithms for constructing modular polynomials, we refer to \cite{Bruinier-Ono-Sutherland,Enge}.

It is known that for elliptic curves \( E \) over $\mathbb{Q}$ with rank at most one, the Birch and Swinnerton-Dyer (BSD) conjecture is valid up to a controlled rational factor \cite{Cohen1}. Therefore, we are particularly interested in elliptic curves with rank greater than 1, among  which \( E_{a1}(389) \)   is  the elliptic curve  of  rank 2 with the smallest conductor.
In this paper, \( E_{XY}(N) \) denotes the \( Y \)-th elliptic curve in the isogeny class \( X \) of conductor \( N \)  in Cremona's elliptic curve database\cite{Cremona}.   Especially,  we have   constructed  \( F_{389}(x, j) \), whose data file in Mathematica occupies approximately 20MB, while the corresponding data file for \( \Phi_{389}(X, Y) \) requires about 900MB of the storage space.

In Section 3, we develop an algorithm to compute the fiber \(\varphi^{-1}(P)\) for any point \( P = (\alpha, \beta) \in \mathbb{C}^2 \) on $E$
using \( F_N(x, j) \) and \( f_N(x, J) \).
Let \(\{M_n\}_{n=1}^{\mu}\) be a complete set of right coset representatives for \(\Gamma_0(N)\) in \(SL_2(\mathbb{Z})\). The fundamental domain $\mathcal{F}_N$ of \(X_0(N)\) is then given by:
\[
\mathcal{F}_N = \bigcup_{n=1}^{\mu} M_n \mathcal{F},
\]
where \(\mathcal{F}\) denotes the standard fundamental domain of \(SL_2(\mathbb{Z})\).
 Since the degree of \(\varphi\) is generally less than the index \(\mu = [SL_2(\mathbb{Z}) : \Gamma_0(N)]\), we have no efficient method to determine a priori which \(M_n\mathcal{F}\) contains points of \(\varphi^{-1}(P)\), and thus no efficient method to compute \(\varphi^{-1}(P)\) without the help of \( F_N(x, j) \) and \( f_N(x, J) \).

Let \( \mathcal{H}^* = \mathcal{H} \cup \mathbb{Q} \cup \{i\infty\} \).  The modular curve for  $\Gamma_0(N)$ is  defined as \( X_0(N) := \Gamma_0(N) \backslash \mathcal{H}^* \),  which is the compactification of  the modular curve  $Y_0(N):= \Gamma_0(N) \backslash \mathcal{H}$.
We denote a point \( \Gamma_0(N)\tau \) in \( X_0(N) \) by \( [\tau] \), sometimes simply as \( \tau \).
We'll  often use  the following expansion form of  $F_N(x,j) $:
\begin{equation}
F_N(x, j)= \sum_{k=0}^K \sum_{l=0}^L c_{k,l} x^k j^l= \sum_{k=0}^K A_k(j)x^k = \sum_{l=0}^L B_l(x)j^l.
\end{equation}

In Section 4, we present an algorithm to compute \( \varphi \)'s poles by  the observation that
$A_K(j(\tau)) = 0$  for $\tau\in \mathcal{H}$ implies that  $ \varphi([M_n\tau]) = \infty$ for some $1 \leqslant n \leqslant \mu$.  This  is an immediate applications of \( F_N(x,j) \).  The poles of  $\varphi$ contain the zeros of Eichler integrals of $f$,  where $f$ is the newform for ${\Gamma _0}(N)$  associated to $E$ by $\varphi$.
Especially,  we give a counterexample to Kodgis's conjecture\cite{Kodgis,Peluse}, demonstrating that not all zeros of Eichler integrals are CM-points.

In Section 5, we present an algorithm to compute the exact values of \( \varphi \)
 at cusps by  the observation that  $\varphi([\tau])=(\alpha ,\beta ) \in {\mathbb{C}^2} $ for $ \tau \in  \mathbb{Q}$
implies that $B_L(\alpha) = 0$.
This is another  immediate applications of \( F_N(x,j) \).

The study of critical points and ramification points of $\varphi$ dates back to the work of Mazur and Swinnerton-Dyer in the 1970s \cite{Mazur-Swinnerton-Dyer}.  In 2005,  Delaunay \cite{Delaunay} proposed an approximate method for computing zeros of the newform $f$ associated to $E$. Subsequently, Chen \cite{Chen} developed two algorithms, namely Poly Relation and Poly Relation-YP, for computing critical polynomials.
In 2016, Brunault \cite{Brunault} described an algorithm for computing the ramification indices of $\varphi$ at cusps and proved that $\varphi$ is unramified at cusps if $f$ has minimal level among its twists by Dirichlet characters. Later in 2018, Corbett and Saha \cite{Corbett-Saha} established a criterion for determining whether a cusp is a critical point of $\varphi$.  This criterion can be  convert  into  a Magma code  to compute the ramification indices of $\varphi$ at cusps.

In Section 6, we develop an algorithm for computing ramification points using the modular polynomials we constructed. Our approach proceeds as follows.
First, we determine a finite set of potential ramification points using $F_N(x,j)$, $f_N(x,J)$, $G_N(y,j)$, $g_N(y,j)$ and their derivatives. Then, we verify whether each candidate point is indeed a ramification point by computing its fiber.

Compared with Delaunay's algorithm, our search for ramification points is in  a finite set. Furthermore, the computational cost of constructing our modular polynomials is significantly lower than that of Chen's Poly Relation algorithm. The main computational bottleneck of our method lies in solving for the intersection points of the relevant curves.

Let $x = (\phi: E \rightarrow E')$ represent a point on $X_0(N)$ via a cyclic $N$-isogeny $\phi$. We say $x$ is a  complex multiplication point (CM-point for short) on $X_0(N)$ if there exist two orders $\mathcal{O}_1$, $\mathcal{O}_2$ in an imaginary quadratic field $K = \mathbb{Q}(\sqrt{D})$ such that $\mathcal{O}_1 = \text{End}(E)$ and $\mathcal{O}_2 = \text{End}(E')$ \cite{Zhang}. If $\mathcal{O}_1 = \text{End}(E) = \text{End}(E')$, we call $x$ a Heegner point of $X_0(N)$ \cite{Gross}.   Let $G$ be  the Galois group of the splitting field $L$  of  $\mathbb{Q}(j(E), j(E'), \sqrt{D})$.  For a subgroup $H$ of $G$, we define the semi-trace of the image $\varphi(x)$ a CM-point $x$ under the action of $H$ as:
\[
P = \sum_{\sigma \in H} \varphi(x)^\sigma \in E(L).
\]

For a Heegner point $x$, we may first compute the minimal polynomial $Q(j)$ of the algebraic number $j(E)$, and then calculate the resultant of $F_N(x,j)$ and $Q(j)$ with respect to $j$,
finally factor this resultant to obtain the minimal polynomial $f(X)$ of  the abscissa of $\varphi(x)$.
The \texttt{heegner\_point} function in Sage  \cite{Sage} requires
approximate computation of the conjugates of  $\varphi(x)$  and then
numerical approximation to determine $f(X)$.
From this viewpoint, $F_N(x,j)$  provides   a ``total"  formula  for computing the minimal polynomial of
the images of  Heegner points on $X_0(N)$ under $\varphi$.

 In Section 7, we  compute  the semi-trace of the image $\varphi ([\frac{{ - 1 + \sqrt { - 3} }}{2}])$  of  the CM-point  $\tau = \frac{-1 + \sqrt{-3}}{2}$ on  $X_{0}(389)$,  under the action of a 65-element subgroup of the 260-element Galois group $G$ of  $\mathbb{Q}\left(\sqrt{-3}, j\left(389 \cdot \frac{-1 + \sqrt{-3}}{2}\right)\right)$.
Currently, one of the obstacles to progress on the BSD conjecture is the lack of a known method to establish connections between Heegner points on $X_0(N)$ and points of infinite order in $E(\mathbb{Q})$ for elliptic curves $E$ of rank $\geq 2$ via modular parametrizations $\varphi\colon X_0(N) \to E$.
The computation in this section  is an attempt to obtain rational points on $E$ using more general CM-points.

In Section 8, we provide the conditions under which $\deg F_N(x,j)<\mu = [SL_2(\mathbb{Z}):\Gamma_0(N)]$
and some additional properties of $F_N(x,j)$. Note that
if the degree of $x$ in $F_N(x,j)$ equals $\mu$, then $F_N(x,j)$ can serve as a defining equation for $X_0(N)$. For related work on defining equations of $X_0(N)$,  see \cite{Yang}.

In Section 9, we  presents a variant of Algorithm 1 that constructs integer-coefficient polynomials $P_1(j,J)$, $Q_1(j,J)$, $P_2(j,J)$, and $Q_2(j,J)$ such that
\[
x = \frac{P_1(j,J)}{Q_1(j,J)} \quad \text{and} \quad y = \frac{P_2(j,J)}{Q_2(j,J)}.
\]
These polynomials appeared in Kolyvagin's seminal work \cite{Kolyvagin}.
However, as pointed out  by Yang  in \cite{Yang},  it is difficult to explicitly write
down these polynomials in general.
By replacing the input $(j,J)$ with Yang-pairs $(X,Y)$, our algorithm can efficiently compute the polynomials $P_1(X,Y)$, $Q_1(X,Y)$, $P_2(X,Y)$, and $Q_2(X,Y)$ to represent $x$ and $y$. 
We notice  that this issue has already been discussed by C. McLarty and  N. D. Elkies in  \cite{McLarty}.

In Section 10, we discusses the correspondence between the points of degree $d$  on the plane model $Z_0(N)$  of  $X_0(N)$  defined by $\Phi_N(X,Y)$ and rational points on the elliptic curve $E$.
In particular, we associate a point of infinite order in~\( E(\mathbb{Q}) \) with an infinite sequence~$\{ (j(\tau_n), j(N\tau_n)) \}_{n \in \mathbb{Z}^+} $ of algebraic numbers whose degrees are bounded by the degree of~$\varphi$. This provides one seemingly  practicable  approach to addressing  the BSD conjecture.

The code that verifies our computations, along with the outputs can be found on:
https://github.com/SanMinWang2025/Algorithms-for-Modular-\\
Parametrizations-of-Elliptic-Curves-over-Q.git.

\section{Algorithm for Constructing Modular Polynomials $F_N(x,j)$}

Let $E$ be an elliptic curve over $\mathbb{Q}$ with conductor $N$. The Weierstrass equation for $E$ is given by
\begin{equation}
y^2 + a_1xy + a_3y = x^3 + a_2x^2 + a_4x + a_6.
\end{equation}
Let $\varphi: X_0(N) \to E$ be a modular parametrization of $E$, and let $f$ be the  normalized newform of weight $2$ on  $\Gamma_0(N)$ associated to $E$.
For any $\tau \in \mathcal{H}^*$,  define
\begin{equation}
\gamma (\tau) := \int_{{i\infty }}^{\tau} 2\pi i f(z) dz,
\end{equation}
which is called the Eichler integrals of $f$.

For simplicity, we assume throughout this paper that $E$ is an optimal elliptic curve with Manin constant $1$.  Let $L$ be the period lattice of $E$.  Then
\[
L = \{\gamma(M\tau) - \gamma(\tau) : M \in \Gamma_0(N)\}.
\]

Let $\overline{\mathbb{C}} = \mathbb{C} \cup \{\infty\}$. For $\tau \in \mathcal{H}^*$, define:
\begin{align}
x(\tau) &:= \wp(\gamma(\tau)) - \frac{b_2}{12},\\
y(\tau)& := \frac{\wp'(\gamma(\tau))}{2} - \frac{a_1\wp(\gamma(\tau))}{2} + \frac{a_1b_2 - 12a_3}{24},
\end{align}
where $b_2 = a_1^2 + 4a_2$, and $\wp$ is the Weierstrass $\wp$-function for the lattice $L$.  Then

\begin{enumerate}
   \item [(i)]$ \varphi ([\tau ]) =(x(\tau), y(\tau)) \in E;$
    \item[(ii)] \( x: \mathcal{H}^* \to \overline{\mathbb{C}} \) and \( y: \mathcal{H}^* \to \overline{\mathbb{C}} \) are modular functions on \( X_0(N) \);

    \item[(iii)] If \( x(\tau) \neq \infty \), then \( x \) is holomorphic at the point \( \tau \);

    \item[(iv)] If \( y(\tau) \neq \infty \), then \( y \) is holomorphic at the point \( \tau \).
\end{enumerate}

The $q$-expansion of $f(\tau)$ can be computed effectively, enabling term-by-term evaluation of $\gamma(\tau)$. Consequently, the above definitions yield an effective procedure for computing $\varphi([\tau])$ when $\tau \in \mathcal{H}$.  For further details, we refer to \cite{Cohen1,Delaunay}.

Recall that \(  \mathbb{C}(j, J) \) is  the field of meromorphic functions on \( X_0(N) \).
Since \( x \in  \mathbb{C}(j, J) \) and the transcendental degree of \( x \) and \( j \) over \( \mathbb{C} \) is 1, they  satisfy an algebraic relation \( F_N(x, j) = 0 \). Moreover, since  \( \varphi \) is a morphism defined over \( \mathbb{Q} \), \( x \) admits a Fourier expansion at the cusp \( \infty \) with rational coefficients. Consequently, we have $F_N(x, j) \in \mathbb{Z}[x, j].$  We define \( F_N(x, j) \) to be the minimal  polynomial of  \( x \) over
 \(\mathbb{C}(j) \) with coefficients $c_{k,l}\in \mathbb{Z}$ in $(1)$.

Let \(\{M_n \mid 1 \leq n \leq \mu\}\) be a complete set of right coset representatives for \(\Gamma_0(N)\) in \( SL_2(\mathbb{Z}) \). Denote by \( d = \deg(\varphi) \) the degree of the modular parametrization \( \varphi \).

\begin{proposition}
$K \leq \mu$ and $L \leq 2d$.
\end{proposition}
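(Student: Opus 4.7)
The plan is to reinterpret $K$ and $L$ as degrees of the intermediate field $\mathbb{C}(j,x)$ over $\mathbb{C}(j)$ and $\mathbb{C}(x)$ respectively, and then compare these with the degrees of the coverings $X_0(N)\to X(1)$ and $X_0(N)\to\mathbb{P}^1$ induced by $j$ and $x$.

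For the bound $K\leq\mu$, since $F_N(x,j)$ is (up to content) the minimal polynomial of $x$ over $\mathbb{C}(j)$, I have $K=\deg_x F_N=[\mathbb{C}(j,x):\mathbb{C}(j)]$. Because both $j$ and $x$ belong to the function field $\mathbb{C}(X_0(N))$, the tower $\mathbb{C}(j)\subseteq\mathbb{C}(j,x)\subseteq\mathbb{C}(X_0(N))$ yields
\[
K\;\leq\;[\mathbb{C}(X_0(N)):\mathbb{C}(j)].
\]
The right-hand side is the degree of the natural forgetful covering $j:X_0(N)\to X(1)\simeq\mathbb{P}^1$, which is $\mu=[SL_2(\mathbb{Z}):\Gamma_0(N)]$ (both groups contain $-I$, so no factor of two arises). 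This gives $K\leq\mu$.

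For the bound $L\leq 2d$, I would first upgrade the irreducibility of $F_N$: since $F_N(x,j)$ is irreducible in $\mathbb{C}(j)[x]$ and, by the integer-coefficient normalization, primitive in $\mathbb{Z}[j][x]$, Gauss's lemma makes it irreducible in $\mathbb{Z}[x,j]$, hence in $\mathbb{C}[x,j]$, and therefore also in $\mathbb{C}(x)[j]$. Consequently $L=\deg_j F_N=[\mathbb{C}(j,x):\mathbb{C}(x)]$, and the same chain of inclusions $\mathbb{C}(x)\subseteq\mathbb{C}(j,x)\subseteq\mathbb{C}(X_0(N))$ gives $L\leq[\mathbb{C}(X_0(N)):\mathbb{C}(x)]$. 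Now the map $x:X_0(N)\to\mathbb{P}^1$ factors as $x_E\circ\varphi$, where $x_E:E\to\mathbb{P}^1$ is the abscissa of the Weierstrass model. Since $x_E$ has degree $2$ (its only pole is a double pole at $O$) and $\varphi$ has degree $d$, multiplicativity of degrees gives $[\mathbb{C}(X_0(N)):\mathbb{C}(x)]=2d$, whence $L\leq 2d$.

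The only step that I anticipate needing a little care is the irreducibility transfer from $\mathbb{C}(j)[x]$ to $\mathbb{C}(x)[j]$, which depends on the normalization of $F_N$ so that Gauss's lemma applies; the remainder is a direct dictionary between function-field degrees and degrees of finite morphisms of Riemann surfaces.
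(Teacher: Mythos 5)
Your proof is correct and follows essentially the same route as the paper's: the paper bounds the number of distinct roots of $F_N(x,\beta)=0$ and $F_N(\alpha,j)=0$ by counting points in generic fibres of $j$ and $x$ on $X_0(N)$, and explicitly records the very conclusions $[\mathbb{C}(x,j):\mathbb{C}(j)]\leq\mu$ and $[\mathbb{C}(x,j):\mathbb{C}(x)]\leq 2d$ that you derive from the field towers. Your version is simply the cleaner field-theoretic rendering of that fibre count, with the Gauss-lemma step identifying $L$ with $[\mathbb{C}(x,j):\mathbb{C}(x)]$ made explicit where the paper leaves it implicit.
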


\begin{proof}
Given $\alpha \in \mathbb{C}$,  there exist generally  two distinct values $\beta_1, \beta_2 \in \mathbb{C}$ such that $(\alpha, \beta_1), (\alpha, \beta_2) \in E$. Consequently, there are typically $2d$ distinct points $[\tau_1], \cdots, [\tau_{2d}]$ on $X_0(N)$ satisfying $x([\tau_1]) = \cdots = x([\tau_{2d}]) = \alpha$.
This implies that the equation $F_N(\alpha, j) = 0$ has $2d$ distinct solutions at most, and therefore $L \leq 2d$. In other words, the degree of the field extension $\mathbb{C}(x, j)/\mathbb{C}(x)$ does not exceed $2d$.

For any $\beta \in \mathbb{C}$, there exists $\tau \in \mathcal{H}$ such that $\beta = j(\tau)$. For all $n$ with $1 \leq n \leq \mu$, we have $F_N(x(M_n \tau), j(M_n \tau)) = 0.$
Since $j(M_n \tau) = \beta$ for all $1 \leq n \leq \mu$, this shows that the equation $F_N(x, \beta) = 0$ generally has $\mu$ distinct solutions, and consequently $K \leq \mu$.  In other words, the degree of the field extension $\mathbb{C}(x, j)/\mathbb{C}(j)$ does not exceed $\mu$.
\end{proof}

Let \[x(\tau) = \sum_{n=-2}^\infty a_n q^n, \,\,\,
j(\tau) = \sum_{n=-1}^\infty b_n q^n, \,\,\,x^k j^l = \sum_{n=-2(k+l)}^\infty c(k, l; n) q^n,\]
where $q = {e^{2\pi i\tau }}$.
Then
\[
\sum_{k=0}^K \sum_{l=0}^L c_{k,l}\sum_{n=-2(k+l)}^\infty  c(k, l; n) q^n \equiv 0.
\]
Thus \[\sum\limits_{n =  - 2K - L}^\infty  {\left( {\sum\limits_{k = 0}^K {\sum\limits_{l = 0}^L {{c_{k,l}}} } c(k,l;n)} \right){q^n}}  \equiv 0. \]
This implies for all $n \geq -2K - L$,
\[
\sum_{k=0}^K \sum_{l=0}^L c_{k,l} c(k, l; n) = 0.
\]

Thus the coefficients $c_{k,l}$ can be determined
by solving this system of linear equations. Then we have the following algorithm.

\begin{algorithm}[H]
\caption{Construction of Modular Polynomial $F_N(x, j)$}
\label{alg:modular_poly}
\begin{algorithmic}[1]
\REQUIRE Elliptic curve $E$ over $\mathbb{Q}$ with conductor $N$.
\ENSURE Modular polynomial $F_N(x, j) = \sum_{k=0}^{K} \sum_{l=0}^{L} c_{k,l} x^k j^l$.
\STATE Compute the index $\mu$ of $\Gamma_0(N)$ in $SL_2(\mathbb{Z})$, set $K := \mu$;
 Compute the degree $d $ of  $\varphi$, set $L := 2d$;
 Set $M := (K+1)(L+1)$.
\STATE Compute the $q$-expansion of $x$ to precision $M$;
Compute the $q$-expansion of the modular invariant $j$ to precision $M+1$.

\STATE  For each pair $(k,l)$ with $0 \leq k \leq K$ and $0 \leq l \leq L$, compute the $q$-expansion of
\[
x^k j^l := \sum_{n=-2K-L}^{M} c(k,l;n) q^n.
\]
\STATE Solve the linear system in variables $\{c_{k,l}|0 \leq k \leq K, 0 \leq l \leq L\}$:
\[ \sum_{k=0}^{K} \sum_{l=0}^{L} c_{k,l} c(k, l; n) = 0 \quad \text{for } -2K-L \leq n \leq K(L-1). \]

\STATE Define and output $F_N(x, j) := \sum_{k=0}^{K} \sum_{l=0}^{L} c_{k,l} x^k j^l$.
\end{algorithmic}
\end{algorithm}

\begin{remark}
(i) In Step 1,  the degree $d $ of  $\varphi$ can be computed using the \texttt{ellmoddegree} function in PARI/GP;
(ii) In Step 2, the $q$-expansion of $x$ can be computed using the \texttt{elltaniyama} function in PARI/GP;
(iii) With minor modifications to Algorithm 1, one can compute the modular polynomials
$f_N(x, J)$, $  G_N(y, j) \, \text{and}\,\, g_N(y, J).$
\end{remark}

\begin{example}

The computational results for $N=11$ yield
\begin{align*}
F_{11}(x,j) &= (16 - x)^{11} j^2 \\
&+ (-104748564078368391 + 199736619430410535x + 159480622275659333x^2 \\
&\quad + 6839041777752481x^3 - 29669709666741936x^4 - 4074814667347831x^5 \\
&\quad + 1134855511654843x^6 + 164063633585170x^7 + 5072626276355x^8 \\
&\quad + 38323813979x^9 + 43119747x^{10} + 1486x^{11}) j \\
&+ (9789217 + 4971236x + 1333262x^2 - 52820x^3 + x^4)^3,
\end{align*}
\begin{align*}
f_{11}(x,J) &= (16 - x) J^2 + (6969 + 5732x - 12529x^2 - 6105x^3 + 1309x^4 + 297x^5 - 22x^6) J \\
&+ (97 + 116x + 62x^2 - 20x^3 + x^4)^3.
\end{align*}

\end{example}
\section{Algorithm for computing  the fiber of $\varphi$ over a point of $E$}
Let $[\tau] \in X_0(N)$,  $\alpha = x(\tau)$ and $\beta = j(\tau)$ such that  $\alpha, \beta \in \mathbb{C}$. By the definition of $F_N(X,Y)$, we have $F_N(\alpha, \beta) = 0$. Moreover, we have the following proposition.

\begin{proposition}
Let  $\alpha, \beta \in \mathbb{C}$ such that  $F_N(\alpha, \beta) = 0$.  Then there exists $\tau \in \mathcal{H}$ such that $\alpha = x(\tau)$ and $\beta = j(\tau)$.
\end{proposition}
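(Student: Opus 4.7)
The plan is to upgrade the identity $F_N(x,j)\equiv 0$ from a relation between two meromorphic functions on $X_0(N)$ to a geometric statement about the morphism
\[
\Psi := (x,j)\colon X_0(N) \longrightarrow \mathbb{P}^1 \times \mathbb{P}^1.
\]
Since $X_0(N)$ is a smooth projective curve, $\Psi(X_0(N))$ is closed and irreducible in $\mathbb{P}^1 \times \mathbb{P}^1$. By construction the homogenization of $F_N$ vanishes identically on this image, so $\Psi(X_0(N))\subseteq \bar V$, where $\bar V$ is the projective closure in $\mathbb{P}^1\times\mathbb{P}^1$ of the affine curve $V:=\{F_N=0\}\subset \mathbb{A}^2$.

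The key step will be to show $\Psi(X_0(N))=\bar V$. By the definition given in Section 2, $F_N(x,j)$ is the minimal polynomial of $x$ over $\mathbb{C}(j)$; in particular it is irreducible in $\mathbb{C}(j)[x]$, and then also in $\mathbb{C}[j,x]$ by Gauss's lemma. Hence $\bar V$ is a geometrically irreducible curve. On the other hand $j$ is non-constant on $X_0(N)$, so $\Psi$ is non-constant and $\Psi(X_0(N))$ has dimension at least $1$. An irreducible closed subvariety of dimension $\geq 1$ inside the irreducible curve $\bar V$ must equal $\bar V$, which gives the claim.

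Given $(\alpha,\beta)\in\mathbb{C}^2$ with $F_N(\alpha,\beta)=0$ we then have $(\alpha,\beta)\in V\subseteq\bar V=\Psi(X_0(N))$, so there exists $P\in X_0(N)$ with $(x(P),j(P))=(\alpha,\beta)$. To conclude it remains to see that $P$ comes from a point of $\mathcal{H}$ rather than from a cusp. This follows from the standard fact that $j\colon X_0(N)\to X(1)=\mathbb{P}^1_j$ has $j^{-1}(\infty)$ equal to the cusps of $X_0(N)$: since $\beta=j(P)\in\mathbb{C}$ is finite, $P$ is not a cusp, so $P=[\tau]$ for some $\tau\in\mathcal{H}$ with $x(\tau)=\alpha$ and $j(\tau)=\beta$.

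There is no serious obstacle here; the only places to tread carefully are the irreducibility of $F_N$ (which I would want to record explicitly, as the minimal–polynomial definition together with Gauss's lemma makes it automatic) and the identification of the locus $j=\infty$ on $X_0(N)$ with the cuspidal locus (standard from the covering $X_0(N)\to X(1)$). An alternative, more computational route would be to choose $\tau_0\in\mathcal{H}$ with $j(\tau_0)=\beta$ and to show that the $\mu$ values $\{x(M_n\tau_0)\}_{n=1}^\mu$ exhaust the roots of $F_N(x,\beta)$ in $x$, but this approach becomes awkward when $K<\mu$ or when $\beta$ is a zero of the leading coefficient $A_K(j)$, so I prefer the geometric argument above.
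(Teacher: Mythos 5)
Your argument is correct, and it establishes the same key fact as the paper --- that $X_0(N)$ surjects onto the plane curve cut out by $F_N$ --- but by a cleaner mechanism. The paper's proof passes through the canonical desingularization $g\colon Y\to \overline{C}_N$ of the projective closure of $\{F_N=0\}$, uses the function-field embedding $\mathbb{C}(x,j)\hookrightarrow \mathbb{C}(j,j_N)$ to produce a surjection $h\colon X_0(N)\to Y$, and concludes surjectivity of $g\circ h$ from irreducibility and the uniqueness of the normalization. You instead work directly with $\Psi=(x,j)\colon X_0(N)\to\mathbb{P}^1\times\mathbb{P}^1$: completeness of $X_0(N)$ makes the image closed and irreducible, and a dimension count inside the irreducible curve $\bar V$ forces $\Psi(X_0(N))=\bar V$. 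This avoids resolution of singularities entirely, at the cost of nothing. You also make explicit a step the paper passes over in silence: that the point $P$ with $\Psi(P)=(\alpha,\beta)$ cannot be a cusp because $j(P)=\beta$ is finite and $j^{-1}(\infty)$ is exactly the cuspidal locus; this is needed to land in $\mathcal{H}$ rather than merely in $\mathcal{H}^*$, so including it is an improvement. Your flagged care points (irreducibility of $F_N$ via the minimal-polynomial definition and Gauss's lemma, and the identification of $j=\infty$ with the cusps) are exactly the right ones, and both are standard.
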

\begin{proof}[Proof.]
Define an affine curve
$C_N := \{ (\alpha, \beta) \in \mathbb{C}^2 \mid F_N(\alpha, \beta) = 0 \}$,
and let $\overline{C}_N$ be the projective closure of $C_N$.
Then  there is a canonical desingularization  $g: Y \to \overline{C}_N$ over  $\mathbb{Q}$ of $\overline{C}_N$, i.e.,   a   nonsingular projective  curve $Y$ over $\mathbb{Q}$ and  a surjective proper morphism
$g: Y \to \overline{C}_N$ that is an isomorphism except over the singular points of $\overline{C}_N$, and the
pair $(Y, g: Y \to \overline{C}_N)$  is uniquely determined by $\overline{C}_N$ (up to unique isomorphism).

 The embedding $i: \mathbb{C}(x, j) \to \mathbb{C}(j, j_N)$ induces a surjective morphism
$h: X_0(N) \to Y.$ Therefore  $g\circ h: X_0(N) \to \overline{C}_N$ is surjective.  It follows from the  irreducibility of $F_N(X,Y)$ and the uniqueness of $Y$ that for any $\alpha, \beta \in \mathbb{C}$ with $F_N(\alpha, \beta) = 0$, there exists $\tau \in \mathcal{H}$ such that $\alpha = x(\tau)$ and $\beta =j(\tau)$.
\end{proof}
Given $P=(\alpha, \beta) \in E$, we could compute $j$ corresponding to $\alpha$ by solving  the equation $F_N(\alpha, j) = 0$,  which gives  the possible points  in  $\varphi^{-1}(P)$ in $\mathcal{H}$. Then we have  the following algorithm.
\begin{algorithm}[H]
\caption{Algorithm for computing  the fibers  of $\varphi$}
\begin{algorithmic}[1]
\REQUIRE  $P=(\alpha, \beta) \in E,F_N(x, j),f_N(x, J).$
\ENSURE  $\varphi^{-1}(P)$.
\STATE  Solve $F_N(\alpha, j) = 0$  for $j$,  let $j_1, j_2, \cdots, j_{L'}$   be its  roots,  here  $L' \leq L$.
 \STATE  Solve $f_N(\alpha, J) = 0$ for $J$,  let $Z$ be  the set of  its roots.
\STATE  For each { $1 \leq i \leq L'$},
 if {$j_i = 1728$} set $\tau_i := i$;
else if {$j_i = 0$} set  $\tau_i := \frac{-1 + i\sqrt{3}}{2}$;
else construct elliptic curve
    \[ E_i : y^2 + xy = x^3 - \frac{36}{j_i - 1728}x - \frac{1}{j_i - 1728} \]
and let $\omega_1, \omega_2$ be the periods of $E_i$,
    if {$\frac{\omega_1}{\omega_2} \in \mathcal{H}$}
        set $\tau_i := \frac{\omega_1}{\omega_2}$
    else set $\tau_i := \frac{\omega_2}{\omega_1}$.
\STATE Initialize  vector $\text{Inv}:= \emptyset$;
construct a complete system of right coset representatives $\{M_n : 1 \leq n \leq \mu\}$ for $\Gamma_0(N)$ in $SL_2(\mathbb{Z})$.
\STATE  For all { $1 \leq i \leq L'$}, { $1 \leq n \leq \mu$},   if $j(N\cdot M_n\tau_i) \in Z$ and  $\varphi(M_n\tau_i) = (\alpha, \beta)$, append $M_n\tau_i$ to $\text{Inv}$;
\STATE Compute values of $\varphi$ at all cusps using Algorithm 4;  if any cusp maps to $P$ under $\varphi$, append it to $\text{Inv}$.
\STATE  Return $\text{Inv}$.
\end{algorithmic}
\end{algorithm}

\begin{remark}
There exist $\tau_1, \tau_2 \in \mathcal{H}$ with $[\tau_1] \neq [\tau_2]$, $j(\tau_1) = j(\tau_2)$ and $j(N\tau_1) = j(N\tau_2)$. These points correspond to singular points of the plane model of $X_0(N)$ determined by the classical modular equation \cite{Klyachko-Kara,Wang}. For such points  $[\tau_1] $ and $ [\tau_2]$, we generally have
$\varphi([\tau_1]) \neq \varphi([\tau_2]).$
Therefore, in Step 3, points of $X_0(N)$ determined by $(j,J)$ may not belong to $\varphi^{-1}(P)$, and we must verify their images under $\varphi$ to make the final determination.
\end{remark}

\begin{example}
Consider the elliptic curve $E_{a1}(389)$ defined by:
\begin{equation}
y^2 + y = x^3 + x^2 - 2x.
 \end{equation}
The genus $g$ of $X_{0}(389)$  is 32,
$\mu = [SL_2(\mathbb{Z}):\Gamma_0(389)] = 390$,
 $\deg(\varphi) = 40$,
$K = \deg_{F_{389}}(x) = 390 = \mu$, $L = 2 \cdot \deg(\varphi) = 80$.
We could factor the polynomial $F_{389}(0,j)$ into two irreducible polynomials $f_1(j)$ and $f_2(j)$, where
\begin{align*}
f_1(j) &:= j^{40} + \cdots + 2^{60} \cdot 3^{12} \cdot 5^{12} \cdot 41^6 \cdot 59^3 \cdot 83^6 \\
&\quad \cdot 25284173^3 \cdot 2266783937522246669759^3 \\
&\quad \cdot 21020023204008338127670158568049294014 \\
&\quad \cdot 9512434767923557515056430506879436133^3 \\
f_2(j) &:= j^{40} + \cdots + 2^{59} \cdot 3^{12} \cdot 5^{11} \cdot 11^{12} \cdot 8429^3 \cdot 170801^3 \\
&\quad \cdot 99507788413092762738223^3 \\
&\quad \cdot 7060906424360528718534826263851663910667 \\
&\quad \cdot 9308687516043502683423204467607^3
\end{align*}

For a specific point $\tau \approx -0.02127691418 + 2.268594296 \times 10^{-5}i$, we have:

 $\varphi(\tau) = (0,0)$,
 $j(\tau) \approx -8.747731379 \times 10^{53} - 2.618292441 \times 10^{54}i$
 and $j(389\tau) \approx -458.3909478 - 562.7806563i$,
where $j(\tau)$ and $j(389\tau)$ are roots of $f_2(j)$ and $f_1(j)$ respectively.

 The rank of $E(\mathbb{Q})$ is 2, and $(0,0)$ is one of its generator of infinite order. Although points $\tau \in \varphi^{-1}((0,0))$ are transcendental complex numbers in $\mathcal{H}$, we have $(j(\tau), j(389\tau)) \in \overline{\mathbb{Q}}^2$. In general, when $(j(\tau), j(N\tau)) \in \overline{\mathbb{Q}}^2$, we can determine the minimal polynomials of $j(\tau)$ and $j(N\tau)$ over $\mathbb{Q}$.  For a given $\tau$, we can construct the point:
\[
P := \sum_{\sigma \in G} \varphi(\tau)^\sigma
\]
where $G$ is the Galois group of the splitting field of $\mathbb{Q}(j(\tau), j(N\tau))$. Clearly, $P \in E(\mathbb{Q})$. This shows that even when $\tau$ is transcendental, if $j(\tau)$ is algebraic, then $\varphi(\tau)$ is an algebraic point on $E$. Thus we could compute its trace.
\end{example}
For an imaginary quadratic order  $\mathcal{O} = {\mathcal{O}_D}$ of discriminant $D < 0$,  let $H_D(x)$  denote its Hilbert class polynomial.  Below are the generators  $P$  of strong Weil curves  $E$ of rank 1 with conductors not exceeding 100, along with the polynomials  for
$\prod\limits_{[\tau]  \in {\varphi ^{ - 1}}(P)} {(x - j(\tau ))} $.  This table can be regarded as a supplement to Cremona's Table 2.

\begin{center}
\begin{tabular}{lll}
\toprule
$E$ & Generator $P$ of $E(\mathbb{Q})$ &$\prod\limits_{\tau  \in {\varphi ^{ - 1}}(P)} {(x - j(\tau ))} $ \\
\midrule
37a1 & $(0,0)$ & $H_{-7}(x)^2$ \\
43a1 & $(0,0)$ & $H_{-7}(x)^2$ \\
57a1 & $(2,1)$ & $H_{-48}(x)^2$ \\
58a1 & $(0,1)$ & $H_{-20}(x)^2$ \\
61a1 & $(1,0)$ & $H_{-12}(x)^2$ \\
65a1 & $(1,0)$ & $H_{-16}(x)^2$ \\
77a1 & $(2,3)$ & $H_{-35}(x)^2$ \\
79a1 & $(0,0)$ & $H_{-7}(x)^2$ \\
82a1 & $(0,0)$ & $H_{-16}(x)^2$ \\
83a1 & $(0,0)$ & $H_{-19}(x)^2$ \\
88a1 & $(2,2)$ & $H_{-96}(x)^2$ \\
89a1 & $(0,0)$ & $H_{-8}(x)^2$ \\
91a1 & $(0,0)$ & $H_{-27}(x)^4$ \\
91b1 & $(-1,3)$ & $f_{9101}(x)$ \\
92b1 & $(1,1)$ & $H_{-44}(x)^2$ \\
99a1 & $(0,0)$ & $H_{-8}(x)^4$ \\
\bottomrule
\end{tabular}
\end{center}
The polynomial for  $E_{b1}(91)$ is defined as
\begin{align*}
f_{91b1}(j) &:= 2^{15} \cdot 5^{11} \cdot 991^3 \cdot 16572269011^3 \\
&\quad - 2^9 \cdot 3^4 \cdot 5^{10} \cdot 23 \cdot 40341091849 \cdot 788088043784206924489867 \, j \\
&\quad + 5^6 \cdot 248698909 \cdot 269818358221989089513089057757 \, j^2 \\
&\quad - 2^7 \cdot 3^4 \cdot 5^4 \cdot 49633 \cdot 5413524016643 \, j^3 \\
&\quad + 1048576 \, j^4.
\end{align*}

\begin{remark}
The elliptic curve $E_{b1}(91)$  is defined by
\[
y^2 + y = x^3 + x^2 - 7x + 5.
\]
and  $(-1,3)$ is its generator of infinite order.
Let $\varphi^{-1}((-1,3)) = \{[\tau_1], [\tau_2], [\tau_3], [\tau_4]\}$. Then the minimal polynomial of each $j(\tau_i)$ is $f_{91b1}(j)$. Since the leading coefficient of $f_{91b1}(j)$ is not 1, all $j(\tau_i)'s$ are not algebraic integers. Thus all  $\tau_i 's$ are transcendental numbers.
In contrast, for all other elliptic curves in the table, the generators of infinite order correspond to  CM-points  on $X_0(N)$.
\end{remark}

\section{Algorithm for Computing Poles of $\varphi$}

For any $\tau \in \mathcal{H}$, we have
\[
x(\tau) = \infty \iff \wp(\gamma(\tau)) = \infty \iff \gamma(\tau) \in L,
\]
where $L$ is the period lattice of $E$. Therefore, the set $x^{-1}(\infty) = \{\tau \in \mathcal{H}^{*} : x(\tau) = \infty\}$ contains zeros of $\gamma(\tau)$.

Let $x(\tau_0) = \infty$ for some $\tau_0 \in \mathcal{H}$. Then there exists a punctured neighborhood $U$ of $\tau_0$ such that for all $\tau \in U$, $x(\tau ) \ne \infty $ and
\[
F_N(x(\tau), j(\tau)) = \sum_{k=0}^K A_k(j(\tau))x(\tau)^k  \equiv 0.
\]
This implies
\[
A_K(j) + \frac{A_{K-1}(j)}{x} + \cdots + \frac{A_0(j)}{x^K} = 0.
\]
Taking the limit as $\tau \to \tau_0$ (and thus $x \to \infty$) yields $A_K(j(\tau_0)) = 0$, showing that
$j(\tau_0)$ is a zero of $A_K(j) = 0$. Then we have the following algorithm.

\begin{algorithm}[H]
\caption{Computation of Non-Cuspidal Poles of $\varphi$}
\begin{algorithmic}[1]
\REQUIRE Modular polynomial $F_N(x,j)$.
\ENSURE Non-cuspidal poles of $\varphi$.
\STATE Construct right coset representatives $\{M_n | 1 \leq n \leq \mu\}$ of $\Gamma_0(N)$ in $SL_2(\mathbb{Z})$ and set  $S:=\emptyset$.
\STATE Find zeros $j_1, \ldots, j_{L'}$ of $A_K(j) = 0$.
\STATE  For each $j_k$,  find $z_k$ such that $j(z_k) = j_k$ using  the algorithm in Step 3 of Algorithm 2.
\STATE    For each  $1 \leq k \leq L'$, $1 \leq n \leq \mu$,
          if $x(M_n z_k) = \infty$,  append $M_n z_k$ to $S$.
\STATE   Output $S$.
\end{algorithmic}
\end{algorithm}

\begin{remark}
Given $\beta \in \mathbb{C}$, define
\[
Z(\beta) = \{x \in \mathbb{C} | F_N(x,\beta) = 0\}, \quad
P(\beta) = \{[\tau] \in X_0(N) | x(\tau) = \infty, j(\tau) = \beta\}.
\]
Then $K = \# I(\beta) + \# Z(\beta)$, where both $Z(\beta)$ and $I(\beta)$ are treated as multisets.
\end{remark}

\begin{example}

Consider the elliptic curve $E_{a1}(38)$ defined by:
\[
y^2 + xy + y = x^3 + 9x + 90.
\]
The genus $g$ of $X_{0}(38)$  is 4,
$\mu = [SL_2(\mathbb{Z}):\Gamma_0(38)] = 60$,
 $\deg(\varphi) = 6$,
 $L = 2\deg(\varphi) = 12$, $K = \mu = 60$
  and its fundamental periods  of $E_{a1}(38)$ are
  \begin{align*}
    \omega_1 &\approx 1.8906322299422985362\\
    \omega_2 &\approx 0.9453161149711 - 0.601313878981i.
\end{align*}

The leading coefficient of $F_{60}(x,j)$ as a polynomial in $x$ is

$
A_{60}(j) = (256453780788797510341879944067 - 5295752119436627969393180j
+ \\
39952945838749259709j^2 - 4534998854j^3 + j^4)^2
$

The equation $A_{60}(j) = 0$ has four solutions:
\begin{align*}
j_1 &= \frac{2267499427}{2} - 1703222995i\sqrt{3} - \frac{1}{2}\sqrt{A-Bi\sqrt{3}}, \\
j_2 &= \frac{2267499427}{2} - 1703222995i\sqrt{3} + \frac{1}{2}\sqrt{A-Bi\sqrt{3}}, \\
j_3 &= \frac{2267499427}{2} + 1703222995i\sqrt{3} - \frac{1}{2}\sqrt{A+Bi\sqrt{3}}, \\
j_4 &= \frac{2267499427}{2} + 1703222995i\sqrt{3} + \frac{1}{2}\sqrt{A+Bi\sqrt{3}},
\end{align*}
where
\[
A = -29669607874801294131 \quad \text{and} \quad B = 15447089883389873728.
\]

These solutions correspond to points on the modular curve $X_0(38)$ as follows.
\begin{align*}
\tau_1 &\approx -0.03335482984 + 0.0006179683i, \\
\tau_2 &\approx 0.45453918428 + 0.0023009932i, \\
\tau_3 &\approx -0.12394615641 + 0.008571122i,\\
\tau_4 &\approx 0.48150957905 + 0.0003798451i.
\end{align*}

Direct computation yields
\[
\gamma(\tau_1) = \gamma(\tau_3) = 0, \quad \gamma(\tau_2) = \gamma(\tau_4) = -\omega_1.
\]

This shows that both $\tau_1$ and $\tau_3$ are zeros of the Eichler integrals. In \cite{Kodgis},
Kodgis conjectured that all zeros of the Eichler integrals $\gamma$  should be CM-points of $X_0(N)$.
However, since neither $j_1$ nor $j_3$ are singular moduli,
both $\tau_1$ and $\tau_3$ are transcendental numbers.
This provides a counterexample to Kodgis's conjecture, demonstrating that not all zeros of Eichler integrals are CM-points.
\end{example}

\section{Algorithm for Computing Exact Values of $\varphi$ at Cusps}

Let $[\tau_0]$ be a cusp of $X_0(N)$ with $x(\tau_0) \neq \infty$. Then
 $j(\tau_0) = \infty$ and
there exists a punctured neighborhood $U$ of $\tau_0$ such that for all $\tau \in U$,
  $j(\tau)\ne \infty$ and
    \[
    F_N(x(\tau), j(\tau)) = \sum_{l=0}^L B_l(x(\tau)) j(\tau)^l \equiv 0.
    \]
  This implies
    \[
    B_L(x) + \frac{B_{L-1}(x)}{j(\tau)} + \cdots + \frac{B_0(x)}{j(\tau)^L} \equiv 0.
    \]
Taking the limit as $\tau \to \tau_0$ (so $j(\tau) \to \infty$) yields
\[
B_L(x(\tau_0)) = 0.
\]
Thus $x(\tau_0)$ is a root of $B_L(x) = 0$.
Hence
\[
\{\varphi([\tau]) \mid \tau \in \mathbb{Q} \cup \{\infty\}\} = \{x \in \mathbb{C} \mid B_L(x) = 0\} \cup \{\infty\}.
\]
\begin{remark}
If the equation $B_L(x) = 0$ has no solutions, this indicates that all cusps of $X_0(N)$ map to $\infty$ under $\varphi$.
\end{remark}

By combining the solutions of $B_L(x) = 0$ with an approximation algorithm, we have the following algorithm to compute the exact values of $\varphi$ at cusps. For all $M \in {\Gamma _0}(N),$  define ${P_f}(M): = \gamma (M{\tau _0}) - \gamma ({\tau _0})$
and call it a period of $f$.

\begin{algorithm}[H]
\caption{Computation of the exact values of $\varphi$ at cusps}
\begin{algorithmic}[1]
\REQUIRE modular polynomial $F_N(x,j)$ and $G_N(y, j).$
\ENSURE Exact values of $\varphi$ at  cusps.

\STATE Initialize an empty vector $Z \gets \emptyset$;
for each root $x_i$ of $B_L(x) = 0$ and
 each root $y_j$ of $C(y) = 0$,
       if  $(x_i, y_j)$ satisfies equation (2)
 $Z \gets Z \cup \{(x_i, y_j)\}$,  where $C(y)$ is
 the leading coefficient of $G_N(y, j)$  as a polynomial in $j$.

\STATE For each cusp $[\frac{s}{r}]$ of $X_0(N)$,
compute the approximate value of $\varphi([\frac{s}{r}])$
by the following step.

{(2.1)}  Find minimal prime $p$ with $(p,N) = 1$ such that
   $[\frac{s}{r}] = [\frac{ps}{r}] = [\frac{rj + s}{pr}]$ for all $0 \leq j \leq p-1$,
 where we use the  criterion from Proposition 2.2.3  in \cite{Cremona}  to
decide whether or not two cusps are equivalent.

{(2.2)}  For each $j = 0,1,\cdots,p$, construct $M_j \in \Gamma_0(N)$ such that
\[ M_p\left(\frac{ps}{r}\right) = M_j\left(\frac{rj + s}{pr}\right) = \frac{s}{r} \]
by the method contained in the proof of Proposition 2.2.3 in \cite{Cremona}.

{(2.3)}  For $j = 0,1,\cdots,p$,
compute the period $P_f(M_j)$ using the method from Proposition 2.10.3 in  \cite{Cremona}.

{(2.4)}  Compute
\[ \gamma\left(\frac{s}{r}\right) := \frac{1}{p+1-a_p} \sum_{j=0}^p P_f(M_j) \]
to a given precision,
where $a_p$ is the coefficient of $q^p$ in the $q$-expansion of $f$.  Recall that
 $f$ is the newform for ${\Gamma _0}(N)$  associated to $E$ by $\varphi$.

{(2.5)}  Calculate
$ \varphi\left(\left[\frac{s}{r}\right]\right) $ to  a given precision using $(4), (5)$ and  the value of
$\gamma\left(\frac{s}{r}\right)$ in (2.4),
and compare the approximate value  with the values in $Z$ to determine the exact value of $\varphi$ at
$\left[\frac{s}{r}\right]$.
\STATE  Output results from Steps 2.
\end{algorithmic}
\end{algorithm}

\begin{remark}
The  approximation algorithm in Step 2  is  a minor improvement to Delaunay's algorithm presented in \cite{Delaunay}.
\end{remark}

\begin{example}
Consider $N = 176$. The elliptic curve $E_{a1}(176)$ is given by
\[ y^2 = x^3 - 4x - 4. \]
The leading coefficient $B_L(x)$  in $F_{176}(x,j)$  as a polynomial in $j$ is 1.
Since $B_L(x) = 0$ has no solutions, this shows that $\varphi$ maps all cusps to $\infty$.
The leading coefficient $A_K(j)$ of $x$  in  $F_{176}(x,j)$  as a polynomial in $x$  is 1. Consequently, the equation $A_K(j) = 0$ has no solutions, which indicates that no points in $\mathcal{H}$ map to $\infty$ under $\varphi$.

Thus, the fiber over infinity consists entirely of cusps, i.e.,
\[
\varphi^{-1}(\infty) =\{[1], [\frac{1}{2}], [\frac{1}{4}], [\frac{3}{4}], [\frac{1}{8}], [\frac{1}{11}], [\frac{1}{16}], [\frac{1}{22}], [\frac{1}{44}], [\frac{3}{44}], [\frac{1}{88}], [\frac{1}{176}] \}.
\]

The modular parametrization $\varphi: X_0(176) \to E_{a1}(176)$ has degree 16.
However, $\varphi^{-1}(\infty)$ contains only 12 distinct points.
 Using the criterion of Corbett and Saha (Theorem 1.1, \cite{Corbett-Saha}), we find that $\varphi$ is ramified at the cusps $\left[\frac{1}{4}\right], \left[\frac{3}{4}\right], \left[\frac{1}{44}\right], \left[\frac{3}{44}\right]$ and
each has ramification index 2. Thus when counting with multiplicity, the fiber satisfies:
\[
\sum_{[\tau] \in \varphi^{-1}(\infty)} e_\varphi([\tau]) = 16,
\]
where $e_\varphi([\tau])$ denotes the ramification index at $[\tau]$.
\end{example}
\section{Algorithm for Determining Ramification Points of $\varphi$}

  Let $\tau_0 \in \mathcal{H}$ such that $[\tau_0]$ is a critical point of $\varphi: X_0(N) \to E$.
Let $\varphi([\tau_0]) = z_0\ne \infty$. Then  $z_0$ is a ramification  point of  $\varphi$. Let $e$ be the ramification index
of  $\varphi$ at $z_0$.
By the local mapping theorem,  for sufficiently small $\varepsilon > 0$, there exists $\delta > 0$ such that
for all $z$ in $D_2 = \{z \in \mathbb{C}: 0 < |z - z_0| < \delta\}$,
the equation $\varphi(\tau) = z$ has exactly $e$ solutions in $D_1 = \{\tau \in \mathcal{H}: 0 < |\tau - \tau_0| < \varepsilon\}$.
Moreover, we assume that  $D_2$ contains no ramification points.
For each $z \in D_2$,  let $\varphi^{-1}(z) = \{[\tau_1], \cdots, [\tau_d]\}$.
By suitably choosing  representatives $\tau_i$  in $[\tau_i]$ and $\varepsilon$,
we make  $\tau_i$ to an analytical function in $z$ for all $1 \leqslant i \leqslant d$.
Moreover, we  require that all  $\tau_1, \cdots, \tau_e$  in $D_1$.
Let $\alpha = x(\tau_0)$.
 As $z \to z_0$, we have $\tau_i \to \tau_0$ for $1 \leq i \leq e$.
 Thus $j(\tau_i) \to j(\tau_0)$ for $1 \leq i \leq e$.
 This implies the equation in $j$
\[ F_N(\alpha, j) = B_L(\alpha) \prod_{l=1}^L (j - j(\tau_l)) = 0 \]
has multiple roots.
Therefore, $\alpha$  is a common solution of
\[ F_N(x,j) = 0 \quad \text{and} \quad \frac{\partial F_N}{\partial j}(x,j) = 0. \]
This gives a necessary condition for $(\alpha ,\beta ) \in E$  to be a ramification point of $\varphi$.
Thus we have the following algorithm.
\begin{algorithm}[H]
\caption{Algorithm for Finding Ramification Points}
\begin{algorithmic}[1]
\REQUIRE
 $F_N(x,j)$, $f_N(x,J)$,  $G_N(y,j)$ and $g_N(y,J)$.

\ENSURE  Set of ramification points of $\varphi$.

\STATE  Compute
$R_1(x) =\text{Resultant}(F_N(x,j), \frac{\partial F_N}{\partial j}(x,j), j)$  and
$R_2(x) = \text{Resultant}(f_N(x,J), \frac{\partial f_N}{\partial J}(x,J), J) $ then
factor them and find their common factor $U(x)$.
\STATE  Compute
$ R_3(y) = \text{Resultant}(G_N(y,j), \frac{\partial G_N}{\partial j}(y,j), j) $  and
$R_4(y) = \text{Resultant}(g_N(y,J), \frac{\partial g_N}{\partial J}(y,J), J) $ then
factor  them and find their common factor $V(y)$.
\STATE  Find  all  $(x_i,y_j)$  with $(x_i,y_j) \in E$
for each solution $x_i$  of $U(x)=0$ and  $y_j$ of  $V(y)=0$;
then  compute $\varphi^{-1}((x_i,y_j))$ using Algorithm 2
and decide whether  $(x_i,y_j)$ is a ramification point.
 This  find the set of the ramification points in $Y_0(N)$.
\STATE
Using Corbett-Saha criterion (Theorem 1.1, \cite{Corbett-Saha}),
Brunault criterion (Theorem 1.2, \cite{Brunault}) and
Algorithm 4,  find the set of the ramification  points in cusps of $X_0(N)$.

\STATE  Output results from Steps 3 and 4.
\end{algorithmic}
\end{algorithm}

\begin{example}
Consider the elliptic curve $E_{a1}(46)$ defined by
\[ y^2 + xy = x^3 - x^2 - 10x + 12. \]
The genus $g$ of $X_{0}(46)$  is 5,
$\mu = [SL_2(\mathbb{Z}):\Gamma_0(46)] = 72$,
 $\deg(\varphi) = 5$,
$K = \deg_{F_{46}}(x) = 72 = \mu$, $L = 2 \cdot \deg(\varphi) = 10$.

{\bf  Step 1.}  The common factor of
$ \text{Resultant}\left(F_{46}(x,j), \frac{\partial F_{46}}{\partial j}(x,j), j\right) $
and \\
$
\text{Resultant}(f_{46}(x,J), \frac{\partial f_{46}}{\partial J}(x,J), J)
$  factors  as
$U = U_1 \cdot U_2 \cdots U_{14},$
where
\begin{align*}
U_1 &= -4 + x, \\
U_2 &= 4 + 3x + x^2, \\
U_3 &= 3184 + 2472x + 567x^2 - 70x^3 + 23x^4.
\end{align*}

{\bf  Step 2.}
The common factor of
$\text{Resultant}(G_{46}(y,j), \frac{\partial G_{46}}{\partial j}(y,j), j)$ and

$\text{Resultant}(g_{46}(y,J), \frac{\partial g_{46}}{\partial J}(y,J), J)$
factors as
$V = V_1 \cdot V_2 \cdot V_3$,  where
\begin{align*}
V_1 &= 2 + y, \\
V_2 &= 1996 - 1004y + 711y^2 + 14y^3 + 27y^4, \\
V_3 &=
7416293824 + 9784853696y - 1586824496y^2 \\
&- 3448946432y^3 + 971389940y^4 + 12153116y^5\\
&+ 7085747y^6 + 37030y^7 + 12167y^8.
\end{align*}

{\bf  Step 3.} We consider the following substeps:

{\bf Substep 3.1} The equation $V_1=0$  in $y$   corresponds to  $U_1 \cdot U_2 = 0$  in $x$.
These  corresponds to three points   $(4,-2)$, $(x_1,-2)$, $(x_2,-2)$  on $E$, where
$x_1 = \tfrac{1}{2}(-3 - i\sqrt{7}), \quad x_2 = \tfrac{1}{2}(-3 + i\sqrt{7})$.
None of  them  are ramification points of $\varphi$.  Here
the fiber $\varphi^{-1}((4,-2))$ contains the cusps $[1]$
and  $[\tfrac{1}{23}]$.
Define matrices and points $\tau_k = M_k \tau_0 \quad \text{for} $
\[
\quad M_1 = \begin{pmatrix} 0 & -1 \\ 1 & 0 \end{pmatrix}, M_2 = \begin{pmatrix} 0 & -1 \\ 1 & 5 \end{pmatrix},M_3 = \begin{pmatrix} 1 &14 \\ 2 & 29 \end{pmatrix}, M_4 = \begin{pmatrix} 1 & 16 \\ 2 & 33 \end{pmatrix},
\]
where $\tau_0 = \tfrac{7 + \sqrt{-7}}{4}$. These represent four distinct points on $X_0(46)$ but
\[
j(\tau_k) = J(\tau_k) = -3375 \quad (1 \leq k \leq 4).
\]
Thus they are singular points of $Z_0(46)$ (See Section 10). By computing the values of $\varphi$ at these points, we have
 $[\tau_1] \in \varphi^{-1}((x_1,-2))$,  $[\tau_2] \in \varphi^{-1}((x_1,y_1))$,  $[\tau_3] \in \varphi^{-1}((x_2,y_2))$ and  $[\tau_4] \in \varphi^{-1}((x_2,-2))$,
where $
y_1 = \frac{7 + i\sqrt{7}}{2}, \quad y_2 = \frac{7 - i\sqrt{7}}{2}.
$

The example shows that although $j = -3375$ is a double root of
$F_{46}(x_1,j)$,  there exist two distinct points $[\tau_1]$ and $[\tau_2]$ on
$X_0(46)$ satisfying  $x(\tau_1) = x(\tau_2) = x_1 \,\,\, \text{and} \,\,\,  j(\tau_1) = j(\tau_2) = -3375
$.

{\bf  Substep 3.2}
The equation $V_2=0$  in $y$   corresponds to
$(1873 + 160x - 234x^2 - 16x^3 + 9x^4)(112 + 40x - 57x^2 - 10x^3 + 9x^4) = 0,$
which doesn't  appear in the factorization of $U$. Then all such $(x_i,y_j)$ are not
ramification points.

{\bf  Substep 3.3}
The equation $V_3=0$  in $y$   corresponds to  $U_3 = 0$  in $x$.
 Computational results show
that the 8 points generated by these pairings are all ramification points of $\varphi$,
and  according to Brunault's criterion, none of  values of $\varphi$  at cusps are ramification points.
\end{example}
\section{Half-Trace of  CM-points}

Let \( x = (\phi : E \rightarrow E') \) be a CM-point on \( X_0(N) \).  Below, we illustrate through an example how to use \( F_N(x, j) \) to compute the half-trace of CM-points. In general, for CM-points, it is difficult  to establish an Artin isomorphism between the ideal class group of the imaginary quadratic field \( K = \mathbb{Q}(\sqrt{D}) \) and the Galois group \( G \) of the splitting field  of  \( \mathbb{Q}(j(E), j(E'), \sqrt{D}) \).  However, for certain special cases, we can still use ideal classes to compute \( G \), as shown in the following example.

\begin{example}
Let \( N = 389 \). Define \( \tau_0 = \frac{-1 + \sqrt{-3}}{2} \), \( \tau_1 = N \tau_0 \), and
\[
\Lambda = \mathbb{Z} + \mathbb{Z} \cdot \tau_0, \quad \Lambda' = \mathbb{Z} \cdot \frac{1}{389} + \mathbb{Z} \cdot \tau_0, \quad x = (\text{id} : \mathbb{C} / \Lambda \rightarrow \mathbb{C} / \Lambda').
\]
Then \( x \) is a CM-point on \( X_0(N) \). The complex form of \( x \) is \( [\tau_0] \). Our computations yield  that \( j(\tau_0) = 0 \)  and  \( j(\tau_1) \) is an algebraic integer of degree 130.

Let \( K = \mathbb{Q}(\sqrt{-3}) \), \( L = \mathbb{Q}(\sqrt{-3}, j(\tau_1)) \), and \( G \) be the Galois group of \( L \).
Let \( P = \varphi([\tau_0]) = (x_1, y_1) \in E \), where
\[
(x_1, y_1) \approx (52794.73595419, 12130816.356885).
\]
Then \( x_1 \) and \( y_1 \) are algebraic numbers of degree 130.

Let \( H \) be the subgroup of order 65 of \( G \). Then
\[
\sum_{\sigma \in H} P^\sigma = (x_H, y_H),
\]
where
\[
x_H = \frac{83054760289201}{68231958132816},
\]
\[
y_H = \frac{-5558102574231821778144 + 591325257523727551823\sqrt{389}}{11116205148463643556288}.
\]

Below are the main computational steps for the result above.

{\bf Step 1}
Substituting  \( j(\tau_0) = 0 \) into \( F_N(x, j(\tau_0)) \)  and factoring
the resulting polynomial yields  the minimal polynomial \( f_0(X) \) of degree 130
for the algebraic number  \( x_1 \).
Let \( a \) be the leading coefficient of \( f_0(X) \), and define \( f_1(X) := a^{129} f_0\left(\frac{X}{a}\right) \). Then \( f_1(X) \) is the minimal polynomial of \( a x_1 \) with the leading coefficient 1.

By computing the resultant of \( f_0(x) \) and  the polynomial  in $(6)$ with respect to the variable \( x \), we obtain a polynomial in \( y \). Factoring this polynomial yields two polynomials of degree 130. Using the approximate value of \( y_1 \) above, we can determine the minimal polynomial of the algebraic number \( y_1\), denoted as \( g_0(X) \). Similarly, let \( b \) be the leading coefficient of \( g_0(X) \),  and define \( g_1(X) := b^{129} g_0\left(\frac{X}{b}\right) \). Then \( g_1(X) \) is the minimal polynomial of \( b y_1 \) with the leading coefficient 1.
Then  we obtain the set of 130 points \( \{(x_n, y_n) : 1 \leq n \leq 130\} \) on \( E \) which are conjugates  of  \( (x_1, y_1) \).

{\bf Step 2}  Compute \( G_0 = \text{Gal}(L/K) \) and its multiplication table. For any \( z \in \mathcal{H} \), let \( \Lambda_z \) denote the lattice \( \mathbb{Z} + \mathbb{Z} \cdot z \). Let \( \mathcal{O} = \mathbb{Z} + \mathbb{Z} \cdot 389\tau_0 \), and let \( \{\Lambda_{\tau_1}, \dots, \Lambda_{\tau_{130}}\} \) be a representative system of the proper \( \mathcal{O} \)-ideal classes, where \( \tau_1 = 389\tau_0 \).
 Let \( [\Lambda_{\tau_i}] \) denote the ideal class of \( \Lambda_{\tau_i} \). By the Artin isomorphism theorem, we can identify \( G_0 \) with $\text{Pic}(\mathcal{O}) = \{ [\Lambda_{\tau_1}], \cdots, [\Lambda_{\tau_{130}}] \}$. The group operation on $G_0$ is defined by
\[
[\Lambda_{\tau_i}] \cdot [\Lambda_{\tau_j}] = [\Lambda_{\tau_k}] \quad \text{if} \quad [\Lambda_{\tau_i}\cdot\Lambda_{\tau_j}] = [\Lambda_{\tau_k}].
\]
Since $F_N(x,j)$ is non-degenerate in degrees, we have $$L = \mathbb{Q}(\sqrt{-3}, x_1) = \mathbb{Q}(x_1 - \sqrt{-3}).$$ Consequently, there exists a bijection between $\{ j(\tau_1), \cdots, j(\tau_{130}) \}$ and $\{ x_1, \cdots, x_{130} \}$.
Without loss of generality, we may pair $j(\tau_k)$ with $x_k$ such that for all $1 \leq k \leq 130$,
\[
F_{389}(x_k, j(\tau_k)) = 0.
\]
Under the Artin isomorphism, let $[\Lambda_{\tau_i}]$ correspond to $\sigma_i \in G_0$. Then:
\[
[\Lambda_{\tau_i}] \cdot [\Lambda_{\tau_j}] = [\Lambda_{\tau_k}] \quad \text{iff} \quad \sigma_i \circ \sigma_j = \sigma_k \quad \text{iff} \quad \sigma_j(j(\tau_k)) = j(\tau_i)
\]
iff $\sigma_j(x_k) = x_i$.

{\bf Step 3}  We compute the standard representation of the point set $\{(x_n, y_n)\}$ on $E$ in $L = \mathbb{Q}(x_1 - \sqrt{-3})$. Let $h(X)$ be the minimal polynomial of $x_1 - \sqrt{-3}$. Then all roots of $h(X)$ are $\{ x_k \pm \sqrt{-3} : 1 \leq k \leq 130 \}$. Define $X_k = ax_n$ and $Y_k = by_n$, where $X_k$ and $Y_k$ are algebraic integers in $L$.

For any $1 \leq i \leq 130$, there exists $1 \leq k \leq 130$ such that for all $0 \leq h \leq 259$,
\[
\sum_{j=1}^{130} \sigma_j\left(X_i (X_1 - \sqrt{-3})^{h} + X_i (X_k + \sqrt{-3})^{h}\right)\\
=
\]\[
\text{Tr}_{L/K}\left(X_i (X_1 - \sqrt{-3})^{h} + X_i (X_k + \sqrt{-3})^{h}\right) \in \mathbb{Z},
\]
 where $k$ is determined by the relation ${\sigma _1}{\sigma _i}^2 = {\sigma _k}$ in our configuration.
This allows us to use the Berg-Olivier algorithm \cite{Cohen}  to obtain a set of rational numbers
$\{ a_{i,h} : 1 \leq i \leq 130, 0 \leq h \leq 259 \}$ such that
\[
X_i = \sum_{h=0}^{259} a_{i,h} (X_1 - \sqrt{-3})^{h}.
\]

Similarly, we can obtain  $\{b_{i,h}:1 \leq i \leq 130, 0 \leq h \leq 259\}$ such that
\[
Y_i = \sum_{h=0}^{259} b_{i,h}(X_1 - \sqrt{-3})^h.
\]
This yields the standard representation of algebraic number pairs
\[
(x_i, y_i) = \left(\frac{X_i}{a}, \frac{Y_i}{b}\right).
\]

{\bf Step 4} The sum $\sum_{\sigma \in H} P^\sigma$ can be computed as follows.
 From Step 2, we obtain the subset of points $(x_i, y_i)$ on $E$ corresponding to the subgroup $H$.
 The exact representation of $x_i$ obtained in Step 3 can be expressed as \texttt{Mod}$(x_i, h(x))$ in the GP/PARI system, allowing standard arithmetic operations.
However, the addition function \texttt{elladd} of points on an  elliptic curve  in GP only accepts numerical inputs, then we have to give it a separate implementation.
For the values $x_n$  obtained in Step 3, each requires approximately 2GB of storage due to their large size.
Division operations are computationally inefficient with \texttt{Mod} representations. Our implementation of \texttt{elladd} uses only addition and multiplication, with final results requiring minor conversion.

\begin{remark}
In the above example, if the rationality of $x_H$ is known in advance,
one can compute numerical approximations of $x_H$ to sufficiently high precision
and then recover its exact value using the \texttt{bestappr} function in PARI/GP.
\end{remark}
\end{example}
\section{Some properties of $F_N(x,j)$ and $f_N(x, J)$}
We require that GCD($\{c_{k,l}\}$)= 1 and $a_{K,L} \geq 1$ in $(1) $ in order to ensure the uniqueness of $F_N(x, j)$.
The same restrictions apply to the polynomials $f_N(x, J)$, $G_N(y, j)$, and $g_N(y, J)$. Let $L(E, s)$ denote the $L$-series of $E$. This allows us to state the following proposition.

\begin{proposition}
If $L(E, 1) = 0$, then $F_N(X, Y) = f_N(X, Y)$.
\end{proposition}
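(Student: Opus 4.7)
The plan is to leverage the Atkin--Lehner involution $w_N : X_0(N) \to X_0(N)$, defined by $\tau \mapsto -1/(N\tau)$. Since $j(-1/(Nu)) = j(Nu)$, the pullback by $w_N$ exchanges $j$ and $J$. If one can show that the hypothesis $L(E,1) = 0$ forces $x \circ w_N = x$, then applying $w_N^{*}$ to the identity $F_N(x, j) \equiv 0$ yields $F_N(x, J) \equiv 0$ on $X_0(N)$; that is, $x$ is annihilated by the polynomial $F_N(X, J) \in \mathbb{C}(J)[X]$. Since $F_N(X,Y)$ is irreducible in $\mathbb{C}[X, Y]$ (a Gauss-type consequence of its irreducibility over $\mathbb{C}(j)$ together with the $\gcd$-normalization), $F_N(X, J)$ is irreducible over $\mathbb{C}(J)$, so it coincides with the minimal polynomial $f_N(X, J)$ of $x$ up to a nonzero scalar. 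The shared normalization (content $1$, positive leading coefficient) forces this scalar to be $1$, giving $F_N = f_N$.

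The heart of the argument is therefore the identity $x \circ w_N = x$ under $L(E,1) = 0$. Let $\epsilon \in \{\pm 1\}$ denote the Atkin--Lehner eigenvalue of $f$, characterised by $f(-1/(N\tau)) = \epsilon N \tau^{2} f(\tau)$. Substituting $z = -1/(Nu)$ in the integral definition of $\gamma(w_N \tau)$ and using this transformation law gives
\[
\gamma(w_N \tau) \;=\; \gamma(0) \;+\; \epsilon\, \gamma(\tau).
\]
A standard Mellin-transform calculation along the imaginary axis gives $\gamma(0) = L(f,1) = L(E,1)$. Hence $L(E,1) = 0$ implies $\gamma(0) = 0 \in L$, so $\gamma(w_N \tau) \equiv \epsilon\, \gamma(\tau) \pmod{L}$. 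Because $\wp$ is an even function, $\wp(\epsilon\gamma(\tau)) = \wp(\gamma(\tau))$, and by (4) this gives $x(w_N \tau) = x(\tau)$.

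The main obstacle I anticipate is the careful bookkeeping of two normalizations. First, one must confirm under the Manin-constant-$1$ hypothesis that $\gamma(0) = L(E,1)$ holds on the nose, so that the vanishing $L(E,1) = 0$ translates cleanly to $\gamma(0) \in L$ and not merely to $\gamma(0) \in L$ up to a rational multiple of a period. Second, one must verify that $F_N(X, Y)$ is genuinely irreducible in $\mathbb{Q}[X, Y]$ as a two-variable polynomial, so that irreducibility transfers to $F_N(X, J) \in \mathbb{Q}(J)[X] \subset \mathbb{C}(J)[X]$. Both should follow routinely from the construction in Section 2 and the imposed normalizations, but they are the steps where subtle sign or scaling errors could creep in.
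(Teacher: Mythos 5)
Your proposal is correct and follows essentially the same route as the paper: the Atkin--Lehner transformation law gives $\gamma(-1/(N\tau)) = \varepsilon\gamma(\tau) - \varepsilon\gamma(0)$ with $\gamma(0) = L(E,1) = 0$, evenness of $\wp$ then yields $x\circ w_N = x$, and combined with $j\circ w_N = J$ this forces $F_N = f_N$. (Your constant term is written as $\gamma(0)$ rather than $-\varepsilon\gamma(0)$, but this is immaterial since it vanishes; your explicit irreducibility-and-normalization argument for the final identification is a detail the paper leaves implicit.)
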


\begin{proof}[Proof.]
Let $f|W_N = \varepsilon f$ where $\varepsilon = \pm 1$. Then we have
\[
f\left(\frac{-1}{N\tau}\right) = \varepsilon N\tau^2 f(\tau).
\]
Thus
\[
\gamma\left(\frac{-1}{N\tau}\right) = \int_{i\infty}^{\frac{-1}{N\tau}} 2\pi i f(z) dz = \int_0^\tau 2\pi i f\left(\frac{-1}{Nu}\right) \frac{1}{Nu^2} du
\]
\[
= \varepsilon \int_{i\infty}^\tau 2\pi i f(u) du - \varepsilon \int_{i\infty}^0 2\pi i f(u) du = \varepsilon \int_{i\infty}^\tau 2\pi i f(u) du
\]
since $\int_{i\infty}^0 2\pi i f(u) du = L(E, 1) = 0$.

Consequently, we obtain
\[
x\left(\frac{-1}{N\tau}\right) = \wp \circ \gamma\left(\frac{-1}{N\tau}\right) - \frac{b_2}{12} = \wp\left(\varepsilon \int_{i\infty}^\tau 2\pi i f(u) du\right) - \frac{b_2}{12} = x(\tau).
\]

The equality $F_N(X, Y) = f_N(X, Y)$ follows from
\[
j\left(\frac{-1}{N\tau}\right) = J(\tau) \quad \text{and} \quad x\left(\frac{-1}{N\tau}\right) = x(\tau).
\]
\end{proof}

Let $f(\tau) = \sum_{n=1}^{\infty} a_n q^n$.

\begin{lemma}
Let $N = 4m$ and suppose $\gamma\left(\frac{2}{N}\right) \in L$. Then
\begin{equation}
x\left(\frac{\tau}{2m\tau + 1}\right) = x(\tau).\end{equation}
\end{lemma}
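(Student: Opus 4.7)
The plan is to prove the stronger congruence $\gamma\bigl(\tfrac{\tau}{2m\tau+1}\bigr) \equiv \pm\,\gamma(\tau) \pmod{L}$. The conclusion then follows because $\wp$ is even and $L$-periodic, so $x(\tau) = \wp(\gamma(\tau)) - b_2/12$ depends only on the class of $\pm\gamma(\tau)$ modulo $L$.

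Set $M := \begin{pmatrix} 1 & 0 \\ 2m & 1 \end{pmatrix}$, so that $M\tau = \tfrac{\tau}{2m\tau+1}$ and $M(i\infty) = \tfrac{1}{2m} = \tfrac{2}{N}$. First I would record two properties: (a) $M^{2} = \begin{pmatrix}1&0\\4m&1\end{pmatrix}\in \Gamma_0(N)$, and (b) $M$ normalizes $\Gamma_0(N)$. Property (b) follows from a direct conjugation: for $\begin{pmatrix}a&b\\c&d\end{pmatrix}\in \Gamma_0(N)$, the lower-left entry of $M\begin{pmatrix}a&b\\c&d\end{pmatrix}M^{-1}$ equals $2m(a-d)+c-4m^{2}b$, which is divisible by $N=4m$ since $c\equiv 0\pmod{4m}$ and since both diagonal entries $a,d$ are odd (forced by $ad\equiv 1\pmod 2$), making $a-d$ even. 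This is where the hypothesis $N = 4m$ (rather than merely $2m$) is essential.

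Second, I would establish $f\mid_{2}M = \varepsilon f$ for some $\varepsilon\in\{\pm 1\}$. By (b), $f\mid_{2}M \in S_2(\Gamma_0(N))$. Since $M$-conjugation permutes the $\Gamma_0(N)$-double cosets defining the Hecke operator $T_p$ for every $p\nmid N$ (in fact, it fixes the double coset of $\mathrm{diag}(1,p)$), the slash action of $M$ commutes with all such $T_p$. Hence $f\mid_{2}M$ is a Hecke eigenform sharing every prime-to-$N$ eigenvalue $a_p$ with the newform $f$; strong multiplicity one gives $f\mid_{2}M = c f$, and applying $\mid_{2}M$ again together with (a) forces $c^{2}=1$.

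Finally, define $G(\tau) := \gamma(M\tau) - \varepsilon\,\gamma(\tau)$ on $\mathcal{H}$. Using $(f\mid_{2}M)(\tau) = (2m\tau+1)^{-2}f(M\tau)$ and the chain rule,
\begin{equation*}
G'(\tau) = 2\pi i (f\mid_{2}M)(\tau) - 2\pi i\,\varepsilon f(\tau) = 0,
\end{equation*}
so $G$ is constant. Letting $\tau\to i\infty$, one has $\gamma(i\infty)=0$ and $M(i\infty) = 2/N$, so $G \equiv \gamma(2/N) \in L$ by hypothesis. Therefore $\gamma(M\tau) = \varepsilon\,\gamma(\tau) + \gamma(2/N) \equiv \varepsilon\,\gamma(\tau) \pmod{L}$, and the claim $x(M\tau) = x(\tau)$ follows. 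The main obstacle is the second step: verifying that $M$ lies in the normalizer of $\Gamma_0(N)$ and commutes with every prime-to-$N$ Hecke operator, so that multiplicity one delivers the sign $\varepsilon$. Once $\varepsilon$ is in hand, the remainder of the argument is a single differentiation plus one boundary evaluation.
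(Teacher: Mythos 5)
Your proof is correct, but it reaches the key identity $f\mid_2 M=\varepsilon f$ (for $M=\left(\begin{smallmatrix}1&0\\2m&1\end{smallmatrix}\right)$) by a genuinely different route than the paper. You verify by direct conjugation that $M$ normalizes $\Gamma_0(N)$ (using $4\mid N$ to make $2m(a-d)+c-4m^2b\equiv 0\pmod N$), check that $\mid_2 M$ commutes with $T_p$ for $p\nmid N$, and invoke strong multiplicity one to get $f\mid_2M=\varepsilon f$ with $\varepsilon^2=1$; this leaves the sign undetermined, which is harmless here since $\wp$ is even. The paper instead factors $M$ (up to scalar) as $W_N\cdot\left(\begin{smallmatrix}1&-1/2\\0&1\end{smallmatrix}\right)\cdot W_N$ and computes the sign explicitly: $4\mid N$ forces $a_2=0$, hence $f\mid U_2=0$, hence $f(\tau+\tfrac12)=-f(\tau)$, and combining this with the Fricke eigenvalue gives $f\mid_2M=-f$ exactly. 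The two arguments then conclude identically, by integrating $f\mid_2M-\varepsilon f=0$ from $i\infty$ and evaluating the constant of integration as $\gamma(2/N)\in L$. Your approach is more conceptual and avoids the explicit matrix decomposition, at the cost of importing multiplicity-one; the paper's is more elementary and self-contained, and the extra information it yields (the precise sign $\varepsilon=-1$, and the explicit normalizer element, cf.\ Remark 8.3) would be needed if one wanted the analogous statement for $y$, where $\wp'$ is odd. Both are complete proofs of the stated lemma.
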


\begin{proof}[Proof.]
Since $2^2 \mid N$, we have $a_2 = 0$  by $(f|U_2)(\tau) =
a_2\cdot f(\tau)$. Consequently,  $(f|U_2)(\tau) = 0$, which implies that for all $\tau \in \mathbb{H}$,
\[ f\left(\frac{\tau}{2}\right) + f\left(\frac{\tau + 1}{2}\right) = 0. \]
It follows that for all $\tau \in \mathbb{H}$,
\[ f(\tau) + f\left(\tau + \frac{1}{2}\right) = 0. \]

Now let $f|W_N = \epsilon f$ where $\epsilon = \pm 1$, yielding the transformation law:
\[ f\left(\frac{-1}{m\tau}\right) = \epsilon m\tau^2 f(\tau). \]
By
\begin{align*}
f\left(\frac{\tau}{2m\tau+1}\right) &= f\left(\frac{-1}{m\left(\frac{-1}{2} + \frac{-1}{m\tau}\right)}\right) \\
&= \epsilon m\left(\frac{-1}{2} + \frac{-1}{m\tau}\right)^2 f\left(\frac{-1}{2} + \frac{-1}{m\tau}\right) \\
&= -\epsilon m\left(\frac{-1}{2} + \frac{-1}{m\tau}\right)^2 f\left(\frac{-1}{m\tau}\right) \\
&= -m\left(\frac{-1}{2} + \frac{-1}{m\tau}\right)^2 m\tau^2 f(\tau) \\
&= -(2m\tau+1)^2 f(\tau),
\end{align*}
we have
\[f|\left( {\begin{array}{*{20}{c}}
  1&0 \\
  {2m}&1
\end{array}} \right)(z) =  - f(z).\]
Thus
\[ \int_{i\infty}^{i} 2\pi i f(z)dz + \int_{i\infty}^{i} 2\pi i f\left(\begin{pmatrix} 1 & 0 \\ 2m & 1 \end{pmatrix}z\right) dz = 0. \]
Under the substitution $u = \frac{z}{2mz+1}$, the second integral transforms as
\begin{align*}
\int_{i\infty}^{i} 2\pi i f\left(\begin{pmatrix} 1 & 0 \\ 2m & 1 \end{pmatrix}z\right) dz
& =\int_{i\infty}^{\tau} 2\pi i \frac{1}{(2mz+1)^2} f\left(\frac{z}{2mz+1}\right) dz\\
&= \int_{\frac{1}{2m}}^{\frac{\tau}{2m\tau+1}} 2\pi i f(u) du \\
&= \int_{i\infty}^{\tau} 2\pi i f(u) du - \int_{i\infty}^{\frac{1}{2m}} 2\pi i f(u) du.
\end{align*}

Consequently, we obtain
\[
\int_{i\infty}^{\frac{\tau}{2m\tau+1}} 2\pi i f(u) du + \int_{i\infty}^{\tau} 2\pi i f(z) dz = \int_{i\infty}^{\frac{1}{2m}} 2\pi i f(u) du = \gamma\left(\frac{2}{N}\right).
\]

Since $\gamma\left(\frac{2}{N}\right) \in L$, it follows that:
\[
x\left(\frac{\tau}{2m\tau+1}\right) = x(\tau).
\]
\end{proof}

\begin{remark}
If $4 \mid N$, then the matrix
\[
w = \begin{pmatrix}
1 & 0 \\
\frac{N}{2} & 1
\end{pmatrix} \in \Gamma_0\left(\frac{N}{2}\right)
\]
 belongs to the normalizer of $\Gamma_0(N)$ in $\mathrm{SL}_2(\mathbb{R})$. Thus $w$ define an automorphism of $X_0(N)$. To our knowledge, this automorphism  has not been described in the literature \cite{Atkin-Lehner}. In fact,  $(7)$  implies  that $x$  is a function on $X_0(\frac{N}{2})$.
\end{remark}

Recall that $E \cong \mathbb{C}/L$ and $\{M_n \mid 1 \leq n \leq \mu\}$ is a complete set of right coset representatives for $\Gamma_0(N)$ in $\mathrm{SL}_2(\mathbb{Z})$.
\begin{proposition}
If $4 \mid N$ and $\gamma\left(\frac{2}{N}\right) \in \Lambda$, then the degrees of $F_N(x,j)$  satisfy
\[
K \leq \frac{\mu}{2} \quad \text{and} \quad L \leq d.
\]
\end{proposition}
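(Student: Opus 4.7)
The plan is to sharpen the bounds from Proposition 2.1 by exploiting the extra symmetry provided by Lemma 8.2. First I would note that the hypotheses $4 \mid N$ and $\gamma(2/N) \in \Lambda$ are precisely those of Lemma 8.2, so $x$ is invariant under the fractional linear action of $w = \begin{pmatrix} 1 & 0 \\ N/2 & 1 \end{pmatrix}$ on $\mathcal{H}$. Moreover $j$ is automatically $w$-invariant because $w \in SL_2(\mathbb{Z})$ and $j$ is modular for the full modular group. Since $w \notin \Gamma_0(N)$ but $w^2 \in \Gamma_0(N)$, and since Remark 8.3 records that $w$ normalizes $\Gamma_0(N)$, the element $w$ defines a nontrivial involution on $X_0(N)$ with $2:1$ quotient $\pi : X_0(N) \to Y$, where $Y := \langle \Gamma_0(N), w \rangle \backslash \mathcal{H}^*$.

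Next I would recast $w$-invariance as a statement about function fields. Since both $x$ and $j$ lie in the proper subfield $\mathbb{C}(Y) \subsetneq \mathbb{C}(X_0(N))$, we obtain $\mathbb{C}(x,j) \subseteq \mathbb{C}(Y)$, and in particular
\[
[\mathbb{C}(X_0(N)) : \mathbb{C}(x,j)] \geq [\mathbb{C}(X_0(N)) : \mathbb{C}(Y)] = 2.
\]
This single inequality, absent in the proof of Proposition 2.1, is the only new ingredient.

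From here I would deduce both bounds using multiplicativity of degrees in two towers. Using $\mathbb{C}(j) \subseteq \mathbb{C}(x,j) \subseteq \mathbb{C}(X_0(N))$ together with $[\mathbb{C}(X_0(N)) : \mathbb{C}(j)] = \mu$ gives
\[
K = [\mathbb{C}(x,j) : \mathbb{C}(j)] = \frac{\mu}{[\mathbb{C}(X_0(N)) : \mathbb{C}(x,j)]} \leq \frac{\mu}{2}.
\]
Likewise, from $\mathbb{C}(x) \subseteq \mathbb{C}(x,j) \subseteq \mathbb{C}(X_0(N))$ and the factorization $[\mathbb{C}(X_0(N)) : \mathbb{C}(x)] = [\mathbb{C}(X_0(N)) : \mathbb{C}(E)] \cdot [\mathbb{C}(E) : \mathbb{C}(x)] = d \cdot 2 = 2d$, I obtain
\[
L = [\mathbb{C}(x,j) : \mathbb{C}(x)] \leq \frac{2d}{2} = d.
\]

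The heavy lifting was already done in Lemma 8.2; what remains is a routine Galois-theoretic argument. The only points requiring care are verifying that $w$ acts nontrivially on $X_0(N)$, which follows at once from $N/2 \not\equiv 0 \pmod N$, and recalling that $\deg \varphi = d$ gives $[\mathbb{C}(X_0(N)) : \mathbb{C}(E)] = d$.
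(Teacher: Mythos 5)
Your proof is correct, and it reaches the conclusion by a genuinely different route from the paper's. Both arguments rest on Lemma~8.2 as the essential input, but the paper works pointwise: for a generic $\tau$ it pairs each coset $\Gamma_0(N)M_n$ with the distinct coset $\Gamma_0(N)\left(\begin{smallmatrix}1&0\\ N/2&1\end{smallmatrix}\right)M_n$, observes via Lemma~8.2 that the two cosets give the same value $x(M_n\tau)=x(M_l\tau)$, and concludes that $F_N(x,\beta)=0$ has at most $\mu/2$ distinct roots (and dually that $j$ takes at most $d$ distinct values on the $2d$ points of a generic fibre of $x$). You instead package the same involution structurally: since $w$ normalizes $\Gamma_0(N)$, acts nontrivially on $X_0(N)$, and fixes both $x$ and $j$, the field $\mathbb{C}(x,j)$ lands inside the index-$2$ fixed field $\mathbb{C}(Y)$, and both bounds drop out of the tower law applied to $\mathbb{C}(j)\subseteq\mathbb{C}(x,j)\subseteq\mathbb{C}(X_0(N))$ and $\mathbb{C}(x)\subseteq\mathbb{C}(x,j)\subseteq\mathbb{C}(X_0(N))$ with $[\mathbb{C}(X_0(N)):\mathbb{C}(j)]=\mu$ and $[\mathbb{C}(X_0(N)):\mathbb{C}(x)]=2d$. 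Your version buys rigor and uniformity --- it avoids the paper's ``generically $\mu$ (resp.\ $2d$) distinct values'' language, treats $K$ and $L$ by one mechanism, and would generalize immediately to any larger group of normalizing symmetries fixing $x$; the paper's version is more elementary and makes explicit exactly which pairs of coset representatives are being identified, which is useful for the computations elsewhere in the paper. The small facts you flag as needing care (nontriviality of the induced automorphism because $N/2\not\equiv 0 \pmod N$ even up to sign, and $w^2\in\Gamma_0(N)$ so the automorphism is an involution) are indeed the right things to check and both hold.
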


\begin{proof}[Proof.]
For any given $\beta \in \mathbb{C}$, there exists $\tau \in \mathcal{H}$ such that $\beta = j(\tau)$.  For all $1 \leq n \leq \mu$, we have  $ F_N(x(M_n \tau), \beta) = 0. $

Since $\Gamma_0(N)M_n \neq \Gamma_0(N)\begin{pmatrix} 1 & 0 \\ \frac{N}{2} & 1 \end{pmatrix} M_n$,  there exists an index
$ 1 \leq l \leq \mu, \,\,\, l \neq n$
such that
\[ \Gamma_0(N)\begin{pmatrix} 1 & 0 \\ \frac{N}{2} & 1 \end{pmatrix} M_n = \Gamma_0(N)M_l. \]

From Lemma $8.2$,  it follows that $x(M_n \tau) = x(M_l \tau)$.  Consequently, the set $\{x(M_n \tau)\}_{1 \leq n \leq \mu}$ contains at most $\frac{\mu}{2}$ distinct elements. Therefore, the equation $F_N(x, \beta) = 0$ generally has $\frac{\mu}{2}$ distinct solutions, which implies  $ K \leq \frac{\mu}{2} $. Thus
 the degree of the field extension $\mathbb{C}(x, j)/\mathbb{C}(j)$ is bounded by $\frac{\mu}{2}$.

For any fixed $\alpha \in \mathbb{C}$, there generically exist $2d$ distinct points $[\tau_1], \cdots, [\tau_{2d}]$ on $X_0(N)$ satisfying
\[ x([\tau_1]) = \cdots = x([\tau_{2d}]) = \alpha. \]
For each $[\tau_i]$, our previous discussion shows there exists $[M_n \tau_i] \neq [\tau_i]$ with
\[ x([M_n \tau_i]) = x([\tau_i]) = \alpha. \]
Thus $[M_n \tau_i] \in \{[\tau_1], \cdots, [\tau_{2d}]\}$.
Since $j(M_n \tau_i) = j(\tau_i)$, the set $\{j(\tau_1), \dots, j(\tau_{2d})\}$ contains at most $d$ distinct values. Consequently, the equation $F_N(\alpha, j) = 0$ admits at most $d$ distinct solutions, yielding $L \leq d$. We therefore conclude that the degree of the field extension $\mathbb{C}(x, j)/\mathbb{C}(x)$ is bounded by $d$.
\end{proof}
\begin{example}
Consider the elliptic curve $E_{a1}(40)$ defined by:
\[ y^2 = x^3 - 7x - 6. \]
The genus $g$ of $X_{0}(40)$  is 3,
$\mu = [SL_2(\mathbb{Z}):\Gamma_0(40)] = 72$,
 $\deg(\varphi) =2$,
$K = \deg_{F_{40}}(x) = 36 =  \frac{\mu}{2}$, $L =  \deg(\varphi) =2$.
This shows that the degrees of $F_{40}(x, j)$  are  half of the generic case.
The integral points of $X_0(40)$ are $(-2, 0)$, $(-1, 0)$, and $(3, 0)$. We observe that the equation $F_{40}(-2, j) = 0$  has no solutions.
 Similarly, $F_{40}(-1, j) = 0$ and $F_{40}(3, j) = 0$ have no solutions.
In fact,
$\varphi^{-1}((-2, 0)) = \left(\frac{1}{4}, \frac{1}{8}\right),
\varphi^{-1}((3, 0)) = \left(1, \frac{1}{2}\right), $
$\varphi^{-1}((-1, 0))= \left(\frac{1}{5}, \frac{1}{10}\right),
\varphi^{-1}(\infty) = \left(\frac{1}{20}, \frac{1}{40}\right).
$
Take the points $P_1 = (-3, 2\sqrt{3}i)$ and $P_2 = (-3, -2\sqrt{3}i)$.
The equation
$ F_{40}(-3, j) = 1073741824 \cdot (1141195895676649024 +
 944485450025040j + 847288609443j^2) = 0
$
has two solutions
\begin{align*}
j_1 &= \frac{10648(-182511805 - 236108339i\sqrt{2})}{3486784401}, \\
j_2 &= \frac{10648(-182511805 + 236108339i\sqrt{2})}{3486784401}.
\end{align*}

For the approximate $\tau$-values
\begin{align*}
\tau_{11} &\approx -0.272844389235320 + 0.00701737923645263i, \\
\tau_{12} &\approx 0.117863782898542 + 0.00292282078245356i, \\
\tau_{21} &\approx -0.25000000000000 + 0.0813850458467884i, \\
\tau_{22} &\approx 0.134961072356135 + 0.0122394381578248i,
\end{align*}
we have the $j$-invariant relations
\[ j(\tau_{11}) = j(\tau_{12}) = j_1, \quad j(\tau_{21}) = j(\tau_{22}) = j_2, \]
and
\[ \varphi^{-1}(P_1) = (\lceil \tau_{12} \rceil, \lceil \tau_{21} \rceil), \quad \varphi^{-1}(P_2) = (\lceil \tau_{11} \rceil, \lceil \tau_{22} \rceil). \]
This shows that when the degrees of $F_N(x, j)$ are half of the generic case, each $j$ corresponds to two points  on $X_0(N)$.
\end{example}
\section{ A variant  of  Algorithm 1}
By computing  the  list of  subresultant polynomials
of $F_N(x, j)$ and $f_N(x, J)$ with respect to the variable $x$, we obtain two integer polynomials $P_1(j, J)$ and $Q_1(j, J)$ such that $x = \frac{P_1(j, J)}{Q_1(j, J)}. $  By the same method, we obtain polynomials $P_2(j, J)$ and $Q_2(j, J)$ satisfying
$ y = \frac{P_2(j, J)}{Q_2(j, J)}. $ These polynomials appear in Kolyvagin's seminal work \cite{Kolyvagin}.  In the following, we present a variant of Algorithm 1 that directly computes these polynomials.

\begin{algorithm}[H]
\caption{Compute polynomials $P_1(j,J)$ and $Q_1(j,J)$ such that $x = \frac{P_1(j, J)}{Q_1(j, J)}$}
\begin{algorithmic}[1]
\REQUIRE  Modular functions $x$,  $j$, $J$ and their $q$-expansions.
\ENSURE Modular polynomials $P_1(j,J) = \sum_{k=0}^K \sum_{l=0}^L c_{k,l} j^k J^l$ and \\
\qquad\quad$Q_1(j,J) = \sum_{r=0}^R \sum_{s=0}^S d_{r,s} j^r J^s$.
\STATE  Select positive integers $K, L, R, S$ and set $M = \max\{(K+1)(L+1), (R+1)(S+1)\} + 50$.
\STATE For $0 \leq k \leq K$ and $0 \leq l \leq L$, compute the $q$-expansion of $j^k J^l$ to precision $M$. Let $U(k,l;n)$ denote its coefficient of $q^n$ in the expansion.
\STATE   For $0 \leq r \leq R$ and $0 \leq s \leq S$, compute the $q$-expansion of $xj^r J^s$ to precision $M$. Let $V(r,s;n)$ denote its coefficient of $q^n$ in the expansion.
\STATE  Solve the following  linear system in  variables $\{c_{k,l}\} \cup \{d_{r,s}\}$
\begin{equation}\sum_{k=0}^K \sum_{l=0}^L U(k,l;n)c_{k,l} = \sum_{r=0}^R \sum_{s=0}^S V(r,s;n)d_{r,s} \end{equation}
where $\min\{-K-NL, -R-NS-2\} \leq n \leq M$.
\STATE  If  (9.1)  has no nonzero solution, go to Step 1; otherwise
output the polynomials
\[ P_1(j,J) = \sum_{k=0}^K \sum_{l=0}^L c_{k,l} j^k J^l, \quad Q_1(j,J) = \sum_{r=0}^R \sum_{s=0}^S d_{r,s} j^r J^s. \]
\end{algorithmic}
\end{algorithm}
\begin{remark}
 (i)$K, L, R, S$  in Step 1 are generally difficult to determine theoretically and have to  be found through experimentation, starting with small values;
 (ii) The linear system of  the type $AX=BY$ in Step 4 can be solved in PARI/GP using the \texttt{matintersect} function.
\end{remark}

Applying Algorithm 6 to elliptic curves E with conductors $N \in \{11,14,17,19\}$, we obtain modular parametrizations expressed as rational functions:
\[x = \frac{P_1(j, J)}{Q_1(j, J)}, y = \frac{P_2(j, J)}{Q_2(j, J)}.\]
Substituting these expressions into (2)   produces an integer-coefficient polynomial in $j$ and $J$ that must be divisible by the modular polynomial $\Phi_N(j,J)$. This computational approach, while effective for small conductors, faces significant limitations - as pointed out  by Yang in \cite{Yang},  it is  difficult to explicitly write down these modular functions in general.

By substituting Yang-pairs $(X,Y)$ (defined in \cite{Yang,Chen})  for $(j,J)$ in Algorithm 6, we obtain computationally efficient modular parametrizations for $E$. This is illustrated below.
\begin{example}
Consider the elliptic curve $E_{a1}(46)$  defined by:
\[ y^2 + xy = x^3 - x^2 - 10x - 12. \]
The  genus  for $X_0(46)$ is $g = 5$.
Define the modular functions
\begin{align*}
X &:= \frac{1}{2}\sum_{11} \frac{E_1E_4E_{15}E_{16}E_{17}E_{18}}{E_5E_6E_7E_8E_9E_{22}} - \frac{9}{2}, \\
Y &:= \sum_{11} \frac{E_{16}E_{21}}{E_2E_7} - 2X - 19,
\end{align*}
where
\[ E_g(\tau) = q^{\left(\frac{g^2}{2N}\right)\left(\frac{g}{2}\right)\frac{N}{12}} \prod_{n=1}^{\infty} (1-q^{N(n-1)+g})(1-q^{Nn-g}), \quad q = e^{2\pi i\tau}, \]
and $\sum\limits_{11} {\prod {E_g^{{e_g}}} }$ denotes $\sum\limits_{\gamma  \in {\Gamma _0}/\Gamma } {\prod {E_g^{{e_g}}|\gamma } } $, where $\Gamma$  is  the unique cyclic subgroup of index 11 in $\Gamma_0(46)$.
Define
\begin{align*}
P_1(X,Y) &:= X^4 (-1725 - 52Y) - 42849Y^2 - 6325Y^3 - 230Y^4 \\
       &\quad + X^3 (-27692 - 3862Y - 60Y^2) \\
       &\quad + X^2 (-114264 - 36846Y - 2994Y^2 - 36Y^3) \\
       &\quad + X(-76176Y - 21114Y^2 - 1333Y^3 - 8Y^4), \\
Q_1(X,Y) &:= X^4 (-483 - 12Y) - 4761Y^2 - 207Y^3 \\
         &\quad + X^2 (-28566 - 5382Y - 270Y^2) \\
         &\quad + X^3 (-7337 - 647Y - 8Y^2) \\
         &\quad + X(-9522Y - 1863Y^2 - 36Y^3).
\end{align*}
Then $x = P_1(X,Y)/Q_1(X,Y)$.
Since ${E_{a1}}(46)$ fails to be an involutory curve [\cite{Delaunay}],
$P_1$ and $Q_1$ cannot   be obtained by Yang's construction method \cite{Yang}.
\end{example}

\section{Points of degree $d$  of   $Z_0(N)$ and  rational points on $E$}

Let $\Phi_N(X,Y)$ denote the classical modular polynomial. Define the plane curve
\[ Z_0(N) := \{(X,Y) \in \mathbb{C}^2 : \Phi_N(X,Y) = 0\}, \]
which serves as the canonical plane model for $X_0(N)$. Let $W$ be the finite set of singular points of $Z_0(N)$. The canonical map
\[ \pi: Y_0(N) \rightarrow Z_0(N), \,\,\, \tau \mapsto (j(\tau), J(\tau)) \]
 induces a bijection between $Y_0(N) \setminus \pi^{-1}(W)$ and $Z_0(N) \setminus W$.
More precisely, the fiber above a point in $W$ contain two points in $Y_0(N)$.

If  $(j, J)\in Z_0(N)$ such that  $j$ and $J$ are algebraic numbers of degree $d$,  we call $(j, J)$ a  degree $d$ point of  $Z_0(N)$.  Clearly, if $[\tau]$ is a degree $d$ point of $X_0(N)$, then $(j(\tau), J(\tau))$ is a degree $d$ point of $Z_0(N)$\cite{Ad}.

Given a  point $(j,J) \in Z_0(N)$, we can determine its corresponding $x$  by solving the equations
 $F_N(x, j)=0$ and $f_N(x, J)=0$.  As shown in previous analysis,  such $x$ is unique except in finitely many cases when the degree of $F_N(x,j)$ in $x$ is $\mu$.   In general, degree $d$ points on $Z_0(N)$ correspond to degree $d$ points on $E$.   But we have the following  proposition.

\begin{proposition}
If $(\alpha,\beta) \in E(\mathbb{Q})$, $\varphi([\tau]) = (\alpha,\beta)$ with $\tau \in \mathcal{H}$, then
\[ (j(\tau),J(\tau)) \in \overline{\mathbb{Q}}^2\,\,\, \text{with} \,\,\, \deg(j(\tau)) \leq d \,\,\, \text{and} \,\,\, \deg(J(\tau)) \leq d, \]
where $d$  is the degree of the modular parametrization.
\end{proposition}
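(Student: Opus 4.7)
The plan is to exploit that $\varphi$ is a finite morphism of degree $d$ defined over $\mathbb{Q}$, so the scheme-theoretic fiber over any $\mathbb{Q}$-rational point of $E$ is a zero-dimensional $\mathbb{Q}$-scheme of length $d$.

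First, I would observe that since both $X_0(N)$ and $E$ admit canonical models over $\mathbb{Q}$ and $\varphi$ is defined over $\mathbb{Q}$, the fiber $Z := \varphi^{-1}((\alpha,\beta))$ is a closed subscheme of $X_0(N)_{\mathbb{Q}}$ of length equal to $\deg \varphi = d$. Decompose $Z$ as a disjoint union of closed points $P_1,\dots,P_r$ with residue fields $L_i := k(P_i)$ of degrees $d_i=[L_i:\mathbb{Q}]$ and local multiplicities $m_i$, so that $\sum_{i=1}^r m_i d_i = d$.

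Next, I would match the analytic point $[\tau]$ to one of the $P_i$: the inclusion $[\tau]\in \varphi^{-1}((\alpha,\beta))(\mathbb{C})$ factors through a unique closed point, say $P_1$, via an embedding $L_1\hookrightarrow\mathbb{C}$. Since the modular functions $j$ and $J$ on $X_0(N)$ are defined over $\mathbb{Q}$, their values at $P_1$ lie in $L_1$, and then $j(\tau)$ and $J(\tau)$ are precisely the images of those values under $L_1\hookrightarrow\mathbb{C}$. Hence $j(\tau),J(\tau)\in\overline{\mathbb{Q}}$ with $\deg j(\tau),\,\deg J(\tau)\leq d_1\leq\sum_i m_i d_i = d$.

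A purely elementary reformulation, avoiding scheme language, is to form the polynomial
\[
P_j(T)=\prod_{[\sigma]\in\varphi^{-1}((\alpha,\beta))}\!\!\big(T-j(\sigma)\big)^{m([\sigma])},
\]
a polynomial of degree $d$ in $T$. Because $(\alpha,\beta)\in E(\mathbb{Q})$ and $\varphi,j$ are defined over $\mathbb{Q}$, the fiber (as a divisor) and the function $j$ on it are $\mathrm{Gal}(\overline{\mathbb{Q}}/\mathbb{Q})$-stable, so $P_j(T)\in\mathbb{Q}[T]$. Since $P_j(j(\tau))=0$, the minimal polynomial of $j(\tau)$ over $\mathbb{Q}$ divides $P_j$, giving $\deg j(\tau)\leq d$; the same argument with $J$ in place of $j$ yields $\deg J(\tau)\leq d$.

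The main subtlety is not computational but foundational: one must know that $X_0(N)$ together with the functions $j,J$ and the map $\varphi$ are all defined over $\mathbb{Q}$, and that the analytic fiber over $\mathbb{C}$ corresponds to the base change of the $\mathbb{Q}$-scheme fiber. Given these standard facts (Shimura's canonical model of $X_0(N)$, and the rationality of $\varphi$ for an optimal Weil curve with Manin constant $1$), the degree bound is an immediate consequence; one should be careful, however, to track multiplicities at ramification points, which is why the bound is $d$ and not something smaller like the number of geometric points in the fiber.
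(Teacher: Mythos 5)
Your proof is correct, but it takes a genuinely different route from the paper. The paper argues through its constructed modular polynomials: since $\alpha,\beta\in\mathbb{Q}$, the number $j(\tau)$ is a common root of $F_N(\alpha,j)$ and $G_N(\beta,j)$, whose degrees in $j$ are (generically) $2d$ and $3d$, so the minimal polynomial of $j(\tau)$ divides both, from which the paper concludes $\deg(j(\tau))\mid\gcd(2d,3d)=d$. Your argument instead works directly with the fiber of $\varphi$ over the rational point: $\varphi$ is a finite flat degree-$d$ map of curves over $\mathbb{Q}$, so the fiber divisor over $(\alpha,\beta)$ is Galois-stable of degree $d$, and pushing the $\mathbb{Q}$-rational function $j$ through it produces a degree-$d$ polynomial in $\mathbb{Q}[T]$ annihilating $j(\tau)$. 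What your approach buys is independence from the polynomials $F_N$ and $G_N$ (and from any assumption about their exact $j$-degrees), and it avoids the delicate divisibility step in the paper --- note that $p\mid F$ and $p\mid G$ with $\deg F=2d$, $\deg G=3d$ gives $\deg p\le\gcd$ only after the kind of orbit-counting your fiber argument makes explicit, so your version is arguably the more airtight of the two. What the paper's approach buys is that it stays entirely within the explicit, computable objects of the paper. One small point to tidy in your ``elementary reformulation'': the fiber over an affine rational point can contain cusps (the paper's Section 5 computes exactly such values), where $j=\infty$, so the product $P_j(T)$ should be taken over the non-cuspidal part of the fiber divisor; this sub-divisor is still Galois-stable (cusps form a Galois-stable set) and still contains $[\tau]$ since $\tau\in\mathcal{H}$, so the bound $\deg j(\tau)\le d$ survives. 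Your scheme-theoretic formulation avoids this issue automatically, since the closed point supporting $[\tau]$ is non-cuspidal and $j$ is regular there.
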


\begin{proof}[Proof.]
If the degrees of $F_N(x,j)$ or $G_N(y,j)$ with respect to $j$ are at most $d$, the conclusion is immediate. In general, we assume that  $F_N(x,j)$ and $G_N(y,j)$ have degrees $2d$ and $3d$ in $j$,  respectively.
Let $p(j) \in \mathbb{Q}[j]$ be the minimal polynomial of $j(\tau)$. Then
$p(j) \mid F_N(\alpha,j)  \,\,\, \text{and}  \,\,\, p(j) \mid G_N(\beta,j).$
Consequently, $\deg(j) \mid \gcd(2d,3d) = d$, which implies $\deg(j) \leq d$.
The same argument shows  that $\deg(J(\tau)) \leq d$.
\end{proof}

\begin{corollary}
Let $d=1$. Then  $(\alpha,\beta) \in E(\mathbb{Q})$ if and only if
\[ (j(\tau),J(\tau)) \in \mathbb{Q}^2 \cup \{(\infty,\infty)\}. \]
\end{corollary}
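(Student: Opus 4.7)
The plan is to split the equivalence and handle each direction with what is already on the page. For the forward direction, suppose $\varphi([\tau])=(\alpha,\beta)\in E(\mathbb{Q})$. If $\tau\in\mathcal{H}$, Proposition 10.1 specialised to $d=1$ gives $\deg j(\tau)\le 1$ and $\deg J(\tau)\le 1$, so $j(\tau),J(\tau)\in\mathbb{Q}$. If $\tau$ is a cusp of $X_0(N)$, then $j$ (pulled back from $X_0(1)$) has a pole at every cusp, and $N\tau$ is again a cusp, so $(j(\tau),J(\tau))=(\infty,\infty)$. This handles $\Rightarrow$.

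For the converse, the key structural input is that $d=\deg(\varphi)=1$ forces $\varphi$ to be an isomorphism of smooth projective curves over $\mathbb{Q}$, and in particular induces a bijection $X_0(N)(\mathbb{Q})\to E(\mathbb{Q})$. Thus it suffices to show $[\tau]\in X_0(N)(\mathbb{Q})$ under the rationality hypothesis on $(j(\tau),J(\tau))$. In the generic case $(j_0,J_0)\in\mathbb{Q}^2$, the value $x(\tau)$ is a common root of $F_N(x,j_0)\in\mathbb{Q}[x]$ and $f_N(x,J_0)\in\mathbb{Q}[x]$, hence of their gcd in $\mathbb{Q}[x]$. Since the canonical morphism $\pi\colon X_0(N)\to Z_0(N)$ is birational over $\mathbb{Q}$ and injective on the smooth locus of $Z_0(N)$, this gcd is generically linear, forcing $x(\tau)\in\mathbb{Q}$; running the same argument with $G_N,g_N$ gives $y(\tau)\in\mathbb{Q}$, hence $[\tau]\in X_0(N)(\mathbb{Q})$. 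The cuspidal case $(\infty,\infty)$ is dispatched by enumerating the (finitely many) $N$ for which both $X_0(N)$ has genus $1$ and $\varphi$ has degree $1$, and checking that in each such case the cusps of $X_0(N)$ that actually map into $E$ via $\varphi$ are $\mathbb{Q}$-rational points of $X_0(N)$.

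The main obstacle I expect lies at the singular locus of the plane model $Z_0(N)$: at a $\mathbb{Q}$-rational node of $Z_0(N)$ two distinct points $[\tau_1],[\tau_2]\in X_0(N)$ can share the same $(j,J)$-pair, and a priori they could form a Galois-conjugate pair over a quadratic field, so that rationality of $(j_0,J_0)$ does not directly imply $\mathbb{Q}$-rationality of $[\tau]$ on $X_0(N)$. Such pairs $(j_0,J_0)$ are finite in number and can be extracted from the singular points of $\Phi_N(X,Y)$, so the cleanest completion of the proof is a generic-plus-finite-check: the birational argument above settles the claim away from $Z_0(N)^{\mathrm{sing}}(\mathbb{Q})$, and at each remaining singular rational pair one verifies directly—using the full data of $F_N,f_N,G_N,g_N$—that the preimage realising $\varphi([\tau])=(\alpha,\beta)$ is itself $\mathbb{Q}$-rational, which is possible because $\varphi$ is an isomorphism over $\mathbb{Q}$ and hence respects the Galois action on both sides.
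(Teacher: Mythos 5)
Your forward direction coincides with the paper's: it is immediate from Proposition 10.1 specialised to $d=1$, together with the observation that cusps give $(\infty,\infty)$. Your converse also follows the route the paper only sketches in the paragraph preceding Proposition 10.1 (recover $x(\tau)$ as the common root of $F_N(x,j_0)$ and $f_N(x,J_0)$, which is unique away from finitely many exceptional $(j,J)$), and your observation that $d=1$ makes $\varphi$ a $\mathbb{Q}$-isomorphism is correct and useful. The problem is that both of the ``finite checks'' to which you defer the hard cases are genuinely load-bearing, and your justifications for why they will succeed do not hold up.

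First, at a rational singular point of $Z_0(N)$ your closing sentence is not an argument: Galois-equivariance of the $\mathbb{Q}$-isomorphism $\varphi$ is exactly the mechanism that sends a quadratic-conjugate pair of preimages $[\tau_1],[\tau_2]$ of $(j_0,J_0)$ to a quadratic-conjugate (hence possibly non-rational) pair of points of $E$; equivariance works against you here, not for you. This failure mode is realised in the paper itself: in the $N=46$ example of Section 6 (Substep 3.1) one has the rational pair $(j,J)=(-3375,-3375)$ with $x(\tau_1)=\tfrac{1}{2}(-3-i\sqrt{7})\notin\mathbb{Q}$. So for each genus-one level the rational singular pairs of $\Phi_N$ must actually be listed and tested; nothing structural guarantees the outcome. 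Second, the cuspidal check does not succeed in general: for levels such as $N=27$ (likewise $32$, $36$, $49$) the cusps of denominator $c$ with $\gcd(c,N/c)>2$ form nontrivial Galois orbits over cyclotomic fields and are therefore not individually $\mathbb{Q}$-rational, so their images under the $\mathbb{Q}$-isomorphism $\varphi$ are non-rational torsion points of $E$ even though $(j(\tau),J(\tau))=(\infty,\infty)$. Your enumeration would thus uncover counterexamples to the stated equivalence rather than complete the proof; the cusp clause needs either a restriction on $N$ or on which cusps are admitted. Until these two checks are actually carried out (and the cusp case repaired), the converse direction remains open.
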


\begin{remark}
Let $E(\mathbb{Q})$ be an elliptic curve with rank$(E(\mathbb{Q}))  \geqslant 1$, and fix a point $P \in E(\mathbb{Q})$ of infinite order.
The sequence
 \begin{equation}\{(j(\tau_n), J(\tau_n)) \mid \tau_n \in \varphi^{-1}(nP), n \in \mathbb{Z}_+ \} \subseteq \overline{\mathbb{Q}}^2\end{equation}
establishes that each infinite-order point  \(P\in E(\mathbb{Q}) \) corresponds to an infinite sequence of points \(\{ \Gamma_0(N)\tau_n\}_{n \geq 1} \) on \( X_0(N) \)   for which both  \( j(\tau_n) \) and \( J(\tau_n) \) are algebraic numbers of degree at most $d$.
 Typically,  these \( \tau_n \)'s  are transcendental numbers. The infinite sequence
$\{ (j(\tau_n), J(\tau_n)) \}_{n \in \mathbb{Z}_+}  $
reflects the characteristic  of the infinite-order point \( P \).
Currently,  no explicit relation is known between Heegner points on~\( X_0(N) \) and points of infinite order in~\( E(\mathbb{Q}) \) for elliptic curves~\( E \) with rank greater than~1. A potentially viable alternative approach involves relating the collection~$\{ (j(\tau_n), J(\tau_n)) \}_{n \in \mathbb{Z}^+} $ to the regulator term appearing in the BSD conjecture  for elliptic curves of rank $\geq 2$.
\end{remark}

\end{document}